\documentclass[a4paper,12pt,reqno]{amsart}
\usepackage{a4wide}
\usepackage{amsmath}
\usepackage{amssymb}
\usepackage{amsthm}
\usepackage{latexsym}
\usepackage{graphicx}
\usepackage[english]{babel}
              
\newtheorem{prop}[subsection]{Proposition}
\newtheorem{conj}[subsection]{Conjecture}
\newtheorem{teor}[subsection]{Theorem}
\newtheorem{lema}[subsection]{Lemma}
\newtheorem{cor} [subsection]{Corollary}
\theoremstyle{definition}

\theoremstyle{remark}
\newtheorem{obs} [subsection]{Remark}
\newtheorem{exm} [subsection]{Example}

\def\hdepth{\operatorname{hdepth}}

\numberwithin{equation}{section}

\begin{document}

\title[On the Hilbert depth of a special class of squarefree monomial ideals]
      {On the Hilbert depth of a special class of squarefree monomial ideals}
\author[Andreea I.\ Bordianu, Mircea Cimpoea\c s]{Andreea I.\ Bordianu$^1$, Mircea Cimpoea\c s$^2$}  
\date{}

\keywords{Hilbert depth; Monomial ideal}

\subjclass[2020]{05A18, 06A07, 13C15, 13P10, 13F20}

\footnotetext[1]{ \emph{Andreea I.\ Bordianu}, National University of Science and Technology Politehnica Bucharest, Faculty of
Applied Sciences, 
Bucharest, 060042, E-mail: andreea.bordianu@stud.fsa.upb.ro}
\footnotetext[2]{ \emph{Mircea Cimpoea\c s}, National University of Science and Technology Politehnica Bucharest, Faculty of
Applied Sciences, 
Bucharest, 060042, Romania and Simion Stoilow Institute of Mathematics, Research unit 5, P.O.Box 1-764,
Bucharest 014700, Romania, E-mail: mircea.cimpoeas@upb.ro,\;mircea.cimpoeas@imar.ro}

\begin{abstract}
Let $r$ and $n$ be two positive integers and $S=K[x_1,\ldots,x_{n+r-1}]$, the ring of polynomials in $n+r-1$ variable, over a field $K$.
We consider the squarefree monomial ideal $I_{n,r}:= x_1 \cdots x_{r-1} (x_{r},\ldots,x_{r+n-1}) \subset S$ and
we prove several results regarding the Hilbert depth of $S/I_{n,r}$. Also, we consider the special case $n=r$.
\end{abstract}

\maketitle

\section{Introduction}

Let $K$ be a field and let $S=K[x_1,\ldots,x_N]$ be the polynomial ring over $K$, in $N$ variables.
Let $M$ be a finitely generated graded $S$-module. The Hilbert depth of $M$, denoted by $\hdepth(M)$, is the 
maximal depth of a finitely generated graded $S$-module $U$ with the same Hilbert series as $M$; see
\cite{bruns,maxim,uli} for further details.

Let $0\subset I\subsetneq J\subset S$ be two squarefree monomial ideals. For $0\leq j\leq n$, let $\alpha_j(J/I)$ be
the number of squarefree monomials $u\in J\setminus I$ of degree $j$.
For all $0\leq k\leq  q\leq N$, we consider the integers:
\begin{equation}\label{betak}
  \beta_k^q(J/I):=\sum_{j=0}^k (-1)^{k-j} \binom{q-j}{k-j} \alpha_j(J/I).
\end{equation}
In \cite[Theorem 2.4]{lucrare2} it was proved that
\begin{equation}\label{hdep}
\hdepth(J/I)=\max\{q\;:\;\beta_k^q(J/I)\geq 0\text{ for all }0\leq k\leq q\}.
\end{equation}
Using this formula, our goal is to study the Hilbert depth of $S/I_{n,r}$, where
$$I_{n,r}=x_1\cdots x_{r-1}(x_r,\ldots,x_{n+r-1})\subset S=K[x_1,\ldots,x_{n+r-1}].$$
In Proposition \ref{pro1}, we note that $\hdepth(I_{n,r})=r+\left\lfloor \frac{n-1}{2} \right\rfloor$.

In Corollary \ref{cc1}, we show that, if $n\geq 2$ then, $\hdepth(S/I_{n,r})\leq n+r-3-a$, where $a$ is the maximal 
integer with $\binom{a+r}{r}<n$. In Theorem \ref{t0}, we prove that if $n > r\geq 2$ then 
$\hdepth(S/I_{n,r})\leq n+r-\left\lfloor \frac{n-r-1}{3} \right\rfloor -3$.

In Theorem \ref{t1}, we show that 
$\hdepth(S/I_{n,r})\geq \left\lfloor \frac{n}{2} \right\rfloor + r - 2.$
In Theorem \ref{t2}, we prove that if $n\geq 2$ and $r\geq 1 + \binom{n-2}{\left\lfloor \frac{n-2}{2} \right\rfloor}$, then
$\hdepth(S/I_{n,r})=n+r-3$. Consequently, we show that the class of ideals $I_{n,r}$ provides examples of monomial ideals
$I$ with $\hdepth(S/I)=\hdepth(I)+k$, for any $k\geq 0$; see Remark \ref{ob1}. Also, from Theorem \ref{t3} we deduce immediately
that $\lim\limits_{r\to\infty} \frac{\hdepth(S/I_{n,r})}{r} = 1,\text{ for all }n\geq 2$; see Corollary \ref{ct1}.

In Theorem \ref{t4}, we prove that, for any $\varepsilon>0$, there exists a constant $A=A(\varepsilon,r)$ such that
$\hdepth(S/I_{n,r})\leq \left\lceil \frac{n}{2} \right\rceil + (r-1)\left\lfloor \varepsilon n \right\rfloor + A + r - 2,\text{ for all }n\geq 2$. As a consequence of Theorem \ref{t1} and Theorem \ref{t2}, we prove that 
$\lim\limits_{n\to\infty} \frac{\hdepth(S/I_{n,r})}{n} = \frac{1}{2},\text{ for all }r\geq 2$; see Corollary \ref{ct2}.

In Theorem \ref{t3}, we prove that $\left\lfloor \frac{11n}{6} \right\rfloor - 2 \geq \hdepth(S/I_{n,n})\geq \left\lfloor \frac{9n}{5} \right\rfloor - 2$. Also, we propose some very tight bounds $\hdepth(S/I_{n,n})$, in Conjecture \ref{conj}.

\section{Main results}


Let $n,r\geq 1$ be two integers. Let $N=n+r-1$ and $S=K[x_1,\ldots,x_N]$. We consider the ideal
$$I_{n,r}:=x_1x_2\cdots x_{r-1}(x_{r},x_{r+1},\cdots,x_{r+n-1})\subset S.$$

\begin{prop}\label{pro1}
With the above notation, we have that $\hdepth(I_{n,r})=r+\left\lfloor \frac{n-1}{2} \right\rfloor$.
\end{prop}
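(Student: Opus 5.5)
Two routes are available. The quickest is a reduction to a known result: the Hilbert depth, like the formula \eqref{hdep}, behaves well under adding variables and multiplying by a regular squarefree monomial. Set $u=x_1\cdots x_{r-1}$ and $\mathfrak m_n=(x_r,\ldots,x_{r+n-1})$. Then $I_{n,r}=u\,\mathfrak m_n\cong \mathfrak m_n S$ as graded modules up to a shift of degree $r-1$. Here $\mathfrak m_n S$ is the extension of the maximal ideal of $K[x_r,\ldots,x_{N}]$ to $S$, which adds $r-1$ free variables. It is known (Bir\'o--Howard--Keller--Trotter--Young, and in the squarefree setting Shen and Cimpoea\c s) that $\hdepth(\mathfrak m_n)=\left\lceil \frac{n}{2}\right\rceil=\left\lfloor\frac{n-1}{2}\right\rfloor+1$ in $n$ variables. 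Adding $r-1$ free variables increases the Hilbert depth by exactly $r-1$. Indeed, the Hilbert series gets multiplied by $(1-t)^{-(r-1)}$, and Hilbert depth of squarefree modules is additive along polynomial extensions via Stanley-type decompositions; alternatively one can use \eqref{hdep} directly, as below. This gives $r-1+\lfloor (n-1)/2\rfloor+1$, as claimed.

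To keep things self-contained via \eqref{hdep}, I would compute $\alpha_j(I_{n,r})$ directly. A squarefree monomial lies in $I_{n,r}$ iff it is divisible by $u$ and by some $x_i$ with $i\ge r$. Hence
\[ \alpha_j=\binom{n}{j-r+1}\quad\text{for } j\ge r, \qquad \alpha_j=0 \text{ otherwise}. \]
Substitute $q=r-1+q'$, $k=r-1+k'$ and $j=r-1+j'$. Since $\binom{q-j}{k-j}=\binom{q'-j'}{k'-j'}$, we get $\beta_k^q(I_{n,r})=\beta_{k'}^{q'}(\mathfrak m_n)$, with $\mathfrak m_n\subset K[y_1,\ldots,y_n]$. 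For $k<r-1$ all terms vanish. So $\hdepth(I_{n,r})=r-1+\hdepth(\mathfrak m_n)$, where $q'$ may range up to $n$ (and $q\le N$ holds automatically).

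It remains to show that $\hdepth(\mathfrak m_n)=\lceil n/2\rceil$, using $\alpha_j=\binom{n}{j}$ for $j\ge1$ and $\alpha_0=0$. Here
\[ \beta_k^q=\sum_{j=1}^k(-1)^{k-j}\binom{q-j}{k-j}\binom nj=\sum_{j=0}^k(-1)^{k-j}\binom{q-j}{k-j}\binom nj-(-1)^k\binom qk. \]
The full alternating sum equals $\binom{n-q+k-1}{k}$, by the standard generating-function identity $(1+t)^n(1+t)^{-(q-k+1)}$ adjusted, or by Vandermonde with negative upper index. So
\[ \beta_k^q=\binom{n-q+k-1}{k}-(-1)^k\binom qk. \]
For odd $k$ this is positive. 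For even $k$ we need $\binom{n-q+k-1}{k}\ge\binom qk$, i.e.\ $n-q+k-1\ge q$, which for the worst case $k=2$ reads $n+1\ge 2q$. Thus $q\le\lfloor (n+1)/2\rfloor=\lceil n/2\rceil$, and equality fails for $q$ larger (take $k=2$).

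The main obstacle is getting this closed form for $\beta_k^q$ right. Some care is needed that the even-$k$ condition is monotone in $k$, so that $k=2$ is indeed the binding case: it is, since $\binom{a+k-2}{k}/\binom{q}{k}$ increases when $a=n-q+1\ge q$. Alternatively, I would simply cite the known value of $\hdepth(\mathfrak m_n)$.
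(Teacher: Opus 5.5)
Your proposal is correct, and it contains two arguments. The first is the paper's own proof. The paper gets $\hdepth(I_{n,r})=r-1+\hdepth(\mathfrak m_n)$ from two lemmas of \cite{lucrare2}, covering the regular monomial factor and the adjoined free variables. It then quotes \cite[Example 3.4]{uli} for $\hdepth(\mathfrak m_n)=\lceil n/2\rceil$.

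Two caveats on your version of that route. The value of the \emph{Hilbert} depth of $\mathfrak m_n$ is Uliczka's. The Bir\'o--Howard--Keller--Trotter--Young theorem is about Stanley depth, so by itself it only gives $\hdepth(\mathfrak m_n)\ge\lceil n/2\rceil$. Also, your claim that adjoining $r-1$ variables raises the Hilbert depth by exactly $r-1$ is only sketched. Tensoring gives the inequality $\ge$; the inequality $\le$ is the nontrivial direction.

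Your second, self-contained argument is a genuinely different route, and it closes both gaps. The shift $q=r-1+q'$, $k=r-1+k'$ in \eqref{hdep} gives $\beta_k^q(I_{n,r})=\beta_{k'}^{q'}(\mathfrak m_n)$ directly, which replaces the two external lemmas. Lemma~\ref{magic} then gives $\beta_{k'}^{q'}(\mathfrak m_n)=\binom{n-q'+k'-1}{k'}-(-1)^{k'}\binom{q'}{k'}$. After undoing the shift, this is exactly Proposition~\ref{p1}(2).

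So the proposition could have been deduced from the paper's own later computation. Your route does this and re-derives Uliczka's value rather than citing it, at the price of being tied to the squarefree formula \eqref{hdep}. The paper's two-line proof instead outsources both steps.

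One point to tighten in your last step concerns the equivalence $\binom{n-q+k-1}{k}\ge\binom qk\iff n-q+k-1\ge q$. It uses $\binom qk\ge 1$ and strict monotonicity of $\binom ak$ in $a\ge k$, so it holds only for $k\ge 1$. At $k=0$ both sides equal $1$, so $\beta_0^q=0$, and that case must be set aside explicitly. For even $k\ge 2$ the condition reads $2q\le n+k-1$, which is visibly most restrictive at $k=2$. Your ratio argument for monotonicity in $k$ is therefore unnecessary.
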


\begin{proof}
According to \cite[Lemma 2.10]{lucrare2} and \cite[Lemma 2.13]{lucrare2}, it follows that
$$\hdepth(I_{n,r})=r-1+\hdepth((x_{r},x_{r+1},\cdots,x_{r+n-1})K[x_r,\ldots,x_{r+n-1}]).$$
The conclusion follows from \cite[Example 3.4]{uli}.
\end{proof}

\begin{lema}\label{lemon}
With the above notations, we have that $\alpha_j(I_{n,r})=\begin{cases} 0,& j\leq r-1 \\ \binom{n}{j-r+1},& r\leq j\leq N \end{cases}$.
\end{lema}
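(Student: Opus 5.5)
The plan is to describe explicitly the squarefree monomials in $I_{n,r}$ and then count them by degree. Since $I_{n,r}$ is generated by the squarefree monomials $u_i:=x_1\cdots x_{r-1}x_{r+i-1}$, $1\leq i\leq n$, a monomial lies in $I_{n,r}$ if and only if it is divisible by some $u_i$. So the first step is to note that a squarefree monomial $u\in S$ belongs to $I_{n,r}$ if and only if $x_1x_2\cdots x_{r-1}\mid u$ and $x_k\mid u$ for at least one $k\in\{r,\ldots,r+n-1\}$.

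Next, write such a $u$ as $u=x_1\cdots x_{r-1}\cdot v$, where $v$ is a squarefree monomial in the variables $x_r,\ldots,x_{r+n-1}$ with $\deg v\geq 1$. This gives a bijection between the squarefree monomials of $I_{n,r}$ of degree $j$ and the nonempty subsets of $\{r,\ldots,r+n-1\}$ of cardinality $j-(r-1)$. There are $n$ such variables, so there are $\binom{n}{j-r+1}$ of these subsets. This count is valid for $j\geq r$, i.e. $j-r+1\geq 1$, and it also behaves correctly up to $j=N=n+r-1$, where it gives $\binom{n}{n}=1$.

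Finally, for $j\leq r-1$ a squarefree monomial in $I_{n,r}$ must have degree at least $r$, because it is divisible by some $u_i$ and $\deg u_i=r$. Hence $\alpha_j(I_{n,r})=0$ in that range. There is no real obstacle in this argument. The only point needing care is the characterization of membership: squarefreeness together with divisibility by a generator forces the factor $x_1\cdots x_{r-1}$ and at least one of the remaining variables. The counting is then immediate.
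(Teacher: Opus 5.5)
Your proposal is correct and follows essentially the same route as the paper: $\alpha_j(I_{n,r})=0$ for $j\leq r-1$ because $I_{n,r}$ is generated in degree $r$. For $j\geq r$, you write each squarefree monomial as $x_1\cdots x_{r-1}v$ and count the choices of a squarefree $v$ of degree $j-r+1$ in the $n$ variables $x_r,\ldots,x_{r+n-1}$. Your version just makes the membership characterization and the bijection slightly more explicit than the paper's ``it is clear''.
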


\begin{proof}
Since $I_{n,r}$ is generated in degree $r$, it follows that $\alpha_j(I_{n,r})=0$, for $0\leq j\leq r-1$.
It is clear that any squarefree monomial $u\in I_{n,r}$ of degree $j\geq r$ is of the form $u=x_1x_2\cdots x_{r-1}v$, where $v\in K[x_r,\ldots,x_{n+r-1}]$ 
is a squarefree monomial of degree $j-r+1$. It follows that, for $j\geq r$, $\alpha_j(I_{n,r})$ is equal to the number of squarefree monomials of degree $j-r+1$ in $n$ variables. Hence,
we get the required result.
\end{proof}

We recall the following useful lemma:

\begin{lema}\label{magic}(\cite[Lemma 2.4]{lucrare3})
For any integers $0\leq k\leq q$ and $n\geq 0$, we have that:
$$ \sum_{j=0}^k (-1)^{k-j} \binom{q-j}{k-j}\binom{n}{j} = (-1)^k \binom{q-n}{k} = \binom{n-q+k-1}{k}.$$
\end{lema}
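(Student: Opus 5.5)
Both equalities are identities among integer-valued polynomials in $q$ and $n$, so I would prove each by a generating-function computation, with $q$ and $n$ allowed to be arbitrary integers, $n\geq 0$. Binomials with upper index $m\in\mathbb{Z}$ and lower index $k\geq 0$ are read as $\binom{m}{k}=m(m-1)\cdots(m-k+1)/k!$.

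\textbf{First equality.} I would show that the left-hand side is the coefficient of $t^k$ in $(1+t)^n(1+t)^{-(q-k+1)}$. For the coefficient of $(1-t)^{-m}$ I use the standard expansion
$$(1-t)^{-m}=\sum_{i\geq 0}\binom{m+i-1}{i}t^i .$$
Replacing $t$ by $-t$ and taking $m=q-k+1$ gives
$$(1+t)^{-(q-k+1)}=\sum_{i\geq 0}(-1)^i\binom{q-k+i}{i}t^i .$$
Set $i=k-j$. Then $\binom{q-k+i}{i}=\binom{q-j}{k-j}$, so the $t^{k-j}$-coefficient is exactly $(-1)^{k-j}\binom{q-j}{k-j}$. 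Cauchy's product formula with $\sum_j\binom{n}{j}t^j$ therefore shows that the sum equals
$$[t^k]\,(1+t)^{n-q+k-1}.$$

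\textbf{Second equality.} This coefficient is $\binom{n-q+k-1}{k}$, by Newton's binomial series, which is valid for any integer exponent. The identity $\binom{n-q+k-1}{k}=(-1)^k\binom{q-n}{k}$ is the usual upper negation $\binom{-m}{k}=(-1)^k\binom{m+k-1}{k}$. Concretely, both sides equal
$$\frac{(n-q+k-1)(n-q+k-2)\cdots(n-q)}{k!},$$
after pulling a sign out of each of the $k$ factors of $(q-n)(q-n-1)\cdots(q-n-k+1)$.

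\textbf{Hypotheses and the main care point.} The main thing to watch is that all binomials are read with the general falling-factorial definition. The hypothesis $k\leq q$ guarantees that each coefficient $\binom{q-j}{k-j}$ has nonnegative upper index, so the left side agrees with the ordinary combinatorial meaning. Nothing else is needed.

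An alternative route is induction on $k$ using Pascal's rule. I would avoid it, because the generating-function argument is a direct two-line calculation.
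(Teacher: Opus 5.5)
Your proof is correct. The paper gives no argument of its own here: the lemma is quoted from \cite[Lemma 2.4]{lucrare3}, so your generating-function computation supplies a self-contained proof where the paper only has a citation.

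The usual route to this identity has two steps. First apply upper negation termwise, $(-1)^{k-j}\binom{q-j}{k-j}=\binom{k-q-1}{k-j}$. Then invoke the Chu--Vandermonde convolution $\sum_{j}\binom{n}{j}\binom{k-q-1}{k-j}=\binom{n+k-q-1}{k}$. Your extraction of $[t^k]\,(1+t)^n(1+t)^{-(q-k+1)}$ is exactly the standard proof of that convolution. So the two routes coincide in substance, and yours has the merit of stating every convention explicitly.

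That care is not cosmetic. Unless $n\leq q\leq n+k-1$, one of the two binomials on the right has a negative upper index. For instance, $n=0$, $q=k=1$ gives the sum $-1=\binom{-1}{1}=(-1)^1\binom{1}{1}$. Hence the lemma holds only with the falling-factorial reading you adopt, not under the convention that binomials with negative upper index vanish.
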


\begin{prop}\label{p1}
Let $0\leq k\leq q\leq N$. Then:
\begin{enumerate}
\item[(1)] $\beta_k^q(I_{n,r})=0$, for $0 \leq k\leq r-1$.
\item[(2)] $\beta_k^q(I_{n,r})=\binom{n-q+k-1}{k-r+1} - (-1)^{k-r+1} \binom{q-r+1}{k-r+1}$, for $r \leq k\leq q$.
\item[(3)] $\beta_k^q(S/I_{n,r})=\binom{N-q+k-1}{k}$, for $0 \leq k\leq r-1$.
\item[(4)] $\beta_k^q(S/I_{n,r})=\binom{N-q+k-1}{k} - \binom{n-q+k-1}{k-r+1} - (-1)^{k-r} \binom{q-r+1}{k-r+1}$, for $r \leq k\leq q$.
\end{enumerate}
\end{prop}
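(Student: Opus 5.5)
Each formula comes from substituting the $\alpha_j$ of Lemma \ref{lemon} into the definition \eqref{betak}, reindexing, and applying Lemma \ref{magic}; the quotient formulas then follow by subtraction.

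\textbf{Items (1) and (2).} By Lemma \ref{lemon}, $\alpha_j(I_{n,r})=0$ for $j\leq r-1$, so every term of $\beta_k^q(I_{n,r})$ vanishes when $k\leq r-1$; this is (1). For $k\geq r$, only $j\geq r$ contributes. Put $i=j-r+1$ and $k'=k-r+1$, $q'=q-r+1$. Then $q-j=q'-i$ and $k-j=k'-i$, so
\[
\beta_k^q(I_{n,r})=\sum_{i=1}^{k'}(-1)^{k'-i}\binom{q'-i}{k'-i}\binom{n}{i}.
\]
I add and subtract the missing $i=0$ term, which equals $(-1)^{k'}\binom{q'}{k'}\binom{n}{0}=(-1)^{k'}\binom{q'}{k'}$. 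Lemma \ref{magic}, applied with $k',q'$ (valid since $0\leq k'\leq q'$), gives for the full sum from $i=0$
\[
\binom{n-q'+k'-1}{k'}=\binom{n-q+k-1}{k-r+1},
\]
because $q'-k'=q-k$. Subtracting the $i=0$ term gives exactly (2).

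\textbf{Items (3) and (4).} The key observation is that $\alpha_j(S/I_{n,r})=\binom{N}{j}-\alpha_j(I_{n,r})$, since the squarefree monomials of degree $j$ in $S\setminus I_{n,r}$ are all of them minus those in $I_{n,r}$. As $\beta_k^q$ is linear in the $\alpha_j$, we get $\beta_k^q(S/I_{n,r})=\beta_k^q(S)-\beta_k^q(I_{n,r})$. Lemma \ref{magic} with $N$ in place of $n$ gives $\beta_k^q(S)=\binom{N-q+k-1}{k}$. Combining with (1) yields (3). Combining with (2) yields (4), using $-(-1)^{k-r+1}=(-1)^{k-r}$ to turn the sign.

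\textbf{Main obstacle.} The only delicate point is the binomial convention. Lemma \ref{magic} must be used with possibly negative upper entries, with $\binom{m}{k}$ defined as a polynomial in $m$. I would check that the reindexed sum satisfies the lemma's hypotheses, i.e. $0\leq k'\leq q'$, which holds since $r\leq k\leq q$.
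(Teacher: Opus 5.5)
Your proposal is correct and follows the paper's proof essentially step for step. It uses the same reindexing $\ell=j-r+1$, adds and subtracts the $\ell=0$ term before invoking Lemma \ref{magic}, and obtains (3) and (4) from $\alpha_j(S/I_{n,r})=\binom{N}{j}-\alpha_j(I_{n,r})$ together with Lemma \ref{magic} applied with $N$ in place of $n$. Your explicit check that $0\leq k-r+1\leq q-r+1$, so that the lemma's hypotheses hold after reindexing, is a detail the paper leaves implicit.
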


\begin{proof}
(1) According to Lemma \ref{lemon}, $\alpha_j(I_{n,r})=0$ for $j\leq r-1$. Hence, the conclusion follows from \eqref{betak}.

(2) According to Lemma \ref{lemon} and \eqref{betak}, we have that:
    $$\beta_k^q(I_{n,r})=\sum_{j=r}^k (-1)^{k-j} \binom{q-j}{k-j} \binom{n}{j-r+1}.$$
		Using the substitution $\ell=j-r+1$, we get:
		\begin{align*}
		\beta_k^q(I_{n,r}) & =\sum_{\ell=1}^{k-r+1} (-1)^{k-r+1-\ell} \binom{q-r+1-\ell}{k-r+1-\ell} \binom{n}{\ell} = \\
		                   & =\sum_{\ell=0}^{k-r+1} (-1)^{k-r+1-\ell} \binom{q-r+1-\ell}{k-r+1-\ell} \binom{n}{\ell} -
											    (-1)^{k-r+1}\binom{q-r+1}{k-r+1}.
		\end{align*}
		Using Lemma \ref{magic}, we get the required result.
		
(3,4) Since $\alpha_j(S/I_{n,r})=\binom{N}{j}-\alpha_j(I_{n,r})$, for all $0\leq j\leq N$, from Lemma \ref{magic} it follows that
      $$\beta_k^q(S/I_{n,r})=\binom{N-q+k-1}{k} -\beta_k^q(I_{n,r}),\text{ for all }0\leq k\leq q\leq N.$$
			Hence, the conclusion follows from (1) and (2).
\end{proof}

From Proposition \ref{p1}(3) it follows that 
$$\beta_k^{r-1}(S/I_{n,r}) = \binom{N-r+k}{k}=\binom{n-1+k}{k}\geq 0,\text{ for all }0\leq k\leq r-1,$$
and thus $\hdepth(S/I_{n,r})\geq r-1$. Moreover, from Proposition \ref{p1}(3,4) and \eqref{hdep} we deduce:

\begin{prop}\label{c1}
With the above notations, we have that:
\begin{align*}
\hdepth(S/I_{n,r}) =  \max\{& r-1\leq q\leq n+r-2 \;:\; \binom{N-q+k-1}{k} - \binom{n-q+k-1}{k-r+1} \geq \\ & \geq \binom{q-r+1}{k-r+1}, 
 \text{ for all } r\leq k\leq q\text{ with }k-r\text{ even.}\}. 
\end{align*}
\end{prop}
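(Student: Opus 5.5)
The plan is to specialize formula \eqref{hdep} to $J/I=S/I_{n,r}$ and to use the explicit values of $\beta_k^q(S/I_{n,r})$ from Proposition \ref{p1}(3,4). Three things need to be shown:
\begin{enumerate}
\item[(a)] $q=N$ is never admissible, so the maximum in \eqref{hdep} can be taken over $q\leq N-1=n+r-2$;
\item[(b)] for $r-1\leq q\leq n+r-2$, the inequalities $\beta_k^q\geq 0$ with $k\leq r-1$, or with $k\geq r$ and $k-r$ odd, hold automatically;
\item[(c)] for $k\geq r$ with $k-r$ even, the inequality $\beta_k^q\geq 0$ is exactly the displayed inequality.
\end{enumerate}
Since we already know that $\hdepth(S/I_{n,r})\geq r-1$, the maximum is attained in the range $r-1\leq q\leq n+r-2$. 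Restricting to that range and discarding the automatic conditions then gives the stated formula.

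For (a), take $q=N$ and $k=r$ in Proposition \ref{p1}(4). This gives
$$\beta_r^N(S/I_{n,r})=\binom{r-1}{r}-\binom{0}{1}-\binom{n}{1}=-n<0.$$
Hence $q=N$ violates \eqref{hdep}, and $\hdepth(S/I_{n,r})\leq N-1=n+r-2$.

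For (b), fix $r-1\leq q\leq n+r-2$.
\begin{itemize}
\item If $k\leq r-1$, then $\beta_k^q=\binom{N-q+k-1}{k}$ by Proposition \ref{p1}(3). Here $N-q+k-1\geq k\geq 0$, so this binomial coefficient is nonnegative.
\item If $k\geq r$, set $a=n-q+k-1$ and $b=k-r+1$. Then $a\geq n-q+r-1\geq 1$, and $a\geq b$ because $q\leq n+r-2$. Moreover $N-q+k-1=a+(r-1)$ and $k=b+(r-1)$. Iterating Pascal's rule gives $\binom{a+s}{b+s}\geq\binom{a}{b}$ for $a\geq b\geq 0$ and $s\geq 0$. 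Therefore
$$\binom{N-q+k-1}{k}-\binom{n-q+k-1}{k-r+1}\geq 0.$$
\item When $k-r$ is odd, the last term in Proposition \ref{p1}(4) is $+\binom{q-r+1}{k-r+1}\geq 0$. So $\beta_k^q\geq 0$ automatically in this case.
\end{itemize}

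For (c), when $k-r$ is even, Proposition \ref{p1}(4) reads
$$\beta_k^q=\binom{N-q+k-1}{k}-\binom{n-q+k-1}{k-r+1}-\binom{q-r+1}{k-r+1}.$$
Thus $\beta_k^q\geq 0$ is literally the inequality in the statement. Combining (a), (b) and (c) with \eqref{hdep} yields the claim.

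The only delicate point is the bookkeeping of binomial coefficients with possibly negative upper entries. This is why I restrict to $q\leq n+r-2$ first, where all upper entries are at least $1$. The case $q=N$ is handled by the single explicit computation in (a), with the convention $\binom{m}{k}=0$ for $0\leq m<k$ inherited from Lemma \ref{magic}. I do not expect any other obstacle.
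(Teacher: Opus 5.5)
Your proof is correct and follows the same route as the paper, which obtains the proposition by specializing \eqref{hdep} with the explicit values of $\beta_k^q(S/I_{n,r})$ from Proposition \ref{p1}(3,4). You also make explicit two points the paper leaves tacit: $q=N$ is excluded because $\beta_r^N(S/I_{n,r})=-n<0$, and for $r-1\le q\le n+r-2$ the conditions with $k\le r-1$ or with $k-r$ odd hold automatically.
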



\begin{cor}\label{cc1}
With the above notations, we have that:
$$\hdepth(S/I_{n,r}) \leq \max\{ r-1\leq q\leq n+r-2 \;:\; \binom{n+2r-q-2}{r} \geq n \}.$$
Moreover, if $n\geq 2$ and $a=\max\{j\geq 0\;:\;\binom{j+r}{r}<n\}$, then $\hdepth(S/I_{n,r})\leq n+r-3-a$.
\end{cor}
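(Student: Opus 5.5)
The plan is to read off a single necessary condition from Proposition \ref{c1} by testing only the smallest admissible index, $k=r$. Since $k-r=0$ is even, every $q$ with $r\le q\le n+r-2$ in the set of Proposition \ref{c1} must satisfy the inequality at $k=r$. Using $N=n+r-1$, the three binomials there become
$$\binom{N-q+r-1}{r}=\binom{n+2r-q-2}{r},\qquad \binom{n-q+r-1}{1}=n-q+r-1,\qquad \binom{q-r+1}{1}=q-r+1.$$
Moving the middle term to the right, the condition reads
$$\binom{n+2r-q-2}{r}\ \ge\ (n-q+r-1)+(q-r+1)=n.$$

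The case $q=r-1$ has no constraint with $k=r$ in Proposition \ref{c1}, so it must be handled separately. Here $\binom{n+2r-q-2}{r}=\binom{n+r-1}{r}=\binom{n+r-1}{n-1}\ge n$ for $r\ge 1$, so $q=r-1$ also satisfies the displayed condition. Hence the set in Proposition \ref{c1} is contained in $\{r-1\le q\le n+r-2 : \binom{n+2r-q-2}{r}\ge n\}$. Taking maxima gives the first inequality of the corollary.

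For the second part, I will reparametrize by $q=n+r-2-t$ with $t\ge 0$, so that $n+2r-q-2=r+t$ and the condition becomes $\binom{r+t}{r}\ge n$. The number $a$ is well defined when $n\ge 2$: $\binom{r}{r}=1<n$ gives $a\ge 0$, and $\binom{r+t}{r}\to\infty$ as $t\to\infty$ (because $r\ge 1$) gives finiteness. The function $t\mapsto\binom{r+t}{r}$ is strictly increasing, so $\binom{r+t}{r}\ge n$ holds exactly when $t\ge a+1$. In terms of $q$, this means $q\le n+r-3-a$. Combined with the first part, this yields $\hdepth(S/I_{n,r})\le n+r-3-a$.

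The only step needing care is the boundary case $q=r-1$, where the test index $k=r$ is unavailable. The inequality $\binom{n+r-1}{r}\ge n$ disposes of it, and it is also consistent with the bound, since $a\le n-2$ forces $n+r-3-a\ge r-1$. Everything else is direct substitution.
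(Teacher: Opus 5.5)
Your proof is correct and follows the paper's route: you specialize Proposition~\ref{c1} to the admissible index $k=r$ to get $\binom{n+2r-q-2}{r}\ge n$, then combine the monotonicity of $t\mapsto\binom{r+t}{r}$ with the definition of $a$. You also handle explicitly two points the paper's proof passes over: the boundary case $q=r-1$, where $k=r$ is not available, and the monotonicity that the paper uses implicitly when it concludes $q'>q$.
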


\begin{proof}
Let $q=\hdepth(S/I_{n,r})$. According to Corollary \ref{c1}, if we take $k=r$, then
$$\binom{N-q+r-1}{r}-\binom{n-q+r-1}{1} \geq \binom{q-r+1}{1},$$
which is equivalent to $\binom{n+2r-q-2}{r} \geq n$. On the other hand, if $\binom{n+2r-q'-2}{r}<n$ for some $q'\leq n+r-1$,
then $$\binom{N-q'+r-1}{r}-\binom{n-q'+r-1}{1} < \binom{q'-r+1}{1},$$
and therefore, according Proposition \ref{c1}, we have $q'>q$.

Now, assume $n\geq 2$, let $a=\max\{j\geq 0\;:\;\binom{j+r}{r}<n\}$ and $q'=n+r-2-a$. It follows that
$$\binom{n+2r-q'-2}{r}=\binom{a+r}{r}<n,$$
and thus $\hdepth(S/I_{n,r})\leq q'-1 =n+r-3-a$, as required.
\end{proof}

\begin{exm}\rm
(1) If $n=1$, then $I_{1,r}=(x_1\cdots x_r)\subset S=K[x_1,\ldots,x_r]$ is a principal ideal, and therefore $\hdepth(S/I_{1,r})=r-1$
and $\hdepth(I_{1,r})=r$; see \cite[Theorem 2.3]{bordi1}.

(2) If $r=1$, then $I_{n,1}=(x_1,\ldots,x_n)\subset S=K[x_1,\ldots,x_n]$ is the maximal graded ideal of $S$. In particular, we
have $\hdepth(S/I_{n,1})=0$ and $\hdepth(I_{n,1})=\left\lfloor \frac{n}{2} \right\rfloor$; see \cite[Example 3.4]{uli} and \cite{maxim}.

(2) If $r=2$, then $I_{n,2}$ is the edge ideal of a star graph. In particular, according to \cite[Theorem 2.5]{cipu}, we have
$\hdepth(S/I_{n,2})\geq \left\lfloor \frac{n}{2} \right\rfloor + \left\lfloor \sqrt{n} \right\rfloor - 2,$
while, according to Proposition \ref{pro1}, we have $\hdepth(I_{n,2})=\left\lfloor \frac{n+3}{2} \right\rfloor$.
\end{exm}

In the following, we will tacitly assume that $n,r\geq 2$.

\begin{teor}\label{t0}
If $n > r\geq 2$ then $\hdepth(S/I_{n,r})\leq n+r-\left\lfloor \frac{n-r-1}{3} \right\rfloor -3$.
\end{teor}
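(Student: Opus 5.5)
The plan is to use the characterization in Proposition \ref{c1} and show that every $q$ with $n+r-2-m\le q\le n+r-2$, where $m=\left\lfloor \frac{n-r-1}{3} \right\rfloor$, is ruled out. For each such $q$ we exhibit one $k$, with $r\le k\le q$ and $k-r$ even, for which the inequality there fails. Since Proposition \ref{c1} describes $\hdepth(S/I_{n,r})$ as a maximum over $q\le n+r-2$, this gives $\hdepth(S/I_{n,r})\le n+r-3-m$.

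\emph{Reparametrization.} Write $q=n+r-2-t$ with $0\le t\le m$, and set $j=k-r+1$, so that $j$ must be odd. Since $N-q=t+1$ and $n-q=t+2-r$, the inequality of Proposition \ref{c1} becomes
$$\binom{t+k}{t}-\binom{t+j}{t}\ \ge\ \binom{n-t-1}{j},\qquad k=j+r-1 .$$
To violate it, the right-hand side should be large (so $j$ should sit near the middle of $n-t-1$), while the left-hand side should be small (so $k$ should not be too large).

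\emph{Choice of $k$.} Take $j\in\{t+1,t+2\}$ to be the odd one, i.e.\ $k=r+t$ or $k=r+t+1$. Admissibility requires $k\le q$, that is $j+r-1\le n+r-2-t$, or $j\le n-1-t$. This follows from $2t+2\le n-1$, which holds because $3t\le n-r-1$ and $r\ge 2$. It also gives $r\le k$.

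\emph{Key estimate.} We need
$$\binom{n-t-1}{j}>\binom{t+j+r-1}{t}-\binom{t+j}{t}.$$
It suffices to prove $\binom{n-t-1}{j}\ge\binom{t+j+r}{t}$, since the right-hand side above is strictly smaller than $\binom{t+j+r}{t}$. The hypothesis $3t\le n-r-1$ gives $n-t-1\ge 2t+r$, and binomials are monotone in the upper index, so $\binom{n-t-1}{j}\ge\binom{2t+r}{j}$. Then compare directly.
\begin{itemize}
\item For $j=t+1$ we must compare $\binom{2t+r}{t+1}$ with $\binom{2t+r+1}{t}$.
\item For $j=t+2$ we must compare $\binom{2t+r}{t+2}$ with $\binom{2t+r+2}{t}$.
\end{itemize}

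\emph{Main obstacle.} These comparisons are the crux, and they are tight, so the slack in $n-t-1\ge 2t+r$ versus the needed $2t+r+1$ or $2t+r+2$ is delicate. The first thing I would try is to use the sharper right-hand side $\binom{t+j+r-1}{t}-\binom{t+j}{t}$ rather than $\binom{t+j+r}{t}$. I would also use the strict inequality available when $3t<n-r-1$, and write both sides as ratios of consecutive binomial coefficients, e.g.\ $\binom{a}{t+1}/\binom{a}{t}=\frac{a-t}{t+1}$. Which parity of $j$ occurs depends on $t$, so the two cases must be handled separately.

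If the margin is insufficient in a boundary case (for instance $3t=n-r-1$ together with $j=t+2$), the fallback is to choose $j=t-1$ or $j=t$ instead, whichever is odd. That shrinks the left-hand side $\binom{t+j+r-1}{t}$ while keeping $\binom{n-t-1}{j}$ near its maximum, and the same ratio argument should then close the case.
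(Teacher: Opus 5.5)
\textbf{There is a genuine gap: the key estimate is left unproved, and for your primary choice of $k$ it is false.}

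Your reparametrization to $\binom{t+k}{t}-\binom{t+j}{t}\ge\binom{n-t-1}{j}$, with $j=k-r+1$ odd, is correct. Ruling out every $t\in[0,m]$ is also fine. It even avoids the fact the paper uses implicitly when it treats only $t=m$, namely that the admissible $q$ form an initial segment.

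The failure occurs for odd $t$. There you take $j=t+2$, so $k-r=t+1$ and the left side contains $\binom{2t+r+1}{t}>\binom{2t+r}{t}$. The slack $n-t-1\ge 2t+r$ does not compensate once $t$ is large compared with $r\ge 3$. Concretely, take $n=37$, $r=3$, so $m=11$, and $t=11$. Then $q=27$ must be excluded, since the claimed bound is $26$. Your choice $j=13$, $k=15$ gives
\[
\binom{26}{11}-\binom{24}{11}=5230016\ \ge\ 5200300=\binom{25}{13},
\]
i.e.\ $\beta_{15}^{27}(S/I_{37,3})\ge 0$, so this $k$ rules nothing out.

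Your intermediate reduction to $\binom{n-t-1}{j}\ge\binom{t+j+r}{t}$ is also too lossy, even for even $t$. For $r=2$, $t=2$ it asks for $\binom{6}{3}=20\ge 21=\binom{7}{2}$, which fails.

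\textbf{The missing idea} is never to let $k-r$ exceed $t$. Take $j=2\lfloor t/2\rfloor+1\in\{t,t+1\}$, i.e.\ $k=r+2\lfloor t/2\rfloor$. This is exactly the paper's choice: your primary choice for even $t$ and your ``fallback'' for odd $t$. With it, the comparison needs only monotonicity:
\[
\binom{t+k}{t}\le\binom{2t+r}{t}\le\binom{n-t-1}{t}\le\binom{n-t-1}{j}.
\]
The three inequalities use, in order:
\begin{itemize}
\item $t+k\le 2t+r$;
\item $n-t-1\ge 2t+r$;
\item $n-t-1\ge 2t+2$, so the row $\binom{n-t-1}{\cdot}$ is still increasing at $t$.
\end{itemize}
Strictness then comes from subtracting $\binom{t+j}{t}\ge 1$. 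This argument is uniform in $0\le t\le m$, and admissibility $r\le k\le q$ is inherited from your check for the larger $k$.

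With this choice your plan becomes a complete proof, essentially the paper's. As written, with $j=t+2$ for odd $t$ and a fallback that only ``should'' close the case, it is not a proof.
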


\begin{proof}
Let $N=n+r-1$ and $q=N-\left\lfloor \frac{n-r-1}{3} \right\rfloor -1$. Let $T=\left\lfloor \frac{n-r-1}{3} \right\rfloor$ and 
$k=r+2\left\lfloor \frac{T}{2} \right\rfloor$. Note that \small
$$q-k = n - \left\lfloor \frac{n-r-1}{3} \right\rfloor - 2 - 2\left\lfloor \frac{T}{2} \right\rfloor \geq
        n - 2\left\lfloor \frac{n-r-1}{3} \right\rfloor - 2 \geq n - \frac{2n-2r-2}{3} - 2 = \frac{n+2r-4}{3} \geq 0,$$ \normalsize
and thus $r\leq k\leq q$. Also, $k-r=2s$, where $s=\left\lfloor \frac{T}{2} \right\rfloor$.
We have 
\begin{equation}\label{ecoo1}
\binom{N-q+k-1}{k}=\binom{N-q+k-1}{N-q-1}=\binom{ \left\lfloor \frac{n-r-1}{3} \right\rfloor +r + 2\left\lfloor \frac{T}{2} \right\rfloor }{\left\lfloor \frac{n-r-1}{3} \right\rfloor} \leq \binom{ 2\left\lfloor \frac{n-r-1}{3} \right\rfloor +r}{ \left\lfloor \frac{n-r-1}{3} \right\rfloor}
\end{equation}
Also, since $n-q+k-1=N-q+k-r$ and $(n-q+k-1)-(k-r+1)=N-q-1$, we have:
\begin{equation}\label{ecoo2}
\binom{n-q+k-1}{k-r+1}
=\binom{N-q+k-r}{N-q+1}=\binom{ \left\lfloor \frac{n-r-1}{3} \right\rfloor +1 + 2\left\lfloor \frac{T}{2} \right\rfloor }{\left\lfloor \frac{n-r-1}{3} \right\rfloor}\geq 1
\end{equation}
We claim that
\begin{equation}\label{cleim}
n-\left\lfloor \frac{n-r-1}{3} \right\rfloor - 1\geq 2\left\lfloor \frac{n-r-1}{3} \right\rfloor + 2.
\end{equation}
Indeed, \eqref{cleim} is equivalent to $n-3\geq 3\left\lfloor \frac{n-r-1}{3} \right\rfloor$, which is true,
since $3\left\lfloor \frac{n-r-1}{3} \right\rfloor\leq n-r-1\leq n-3$, as $r\geq 2$. Using \eqref{cleim} and the fact that
$2\left\lfloor \frac{T}{2} \right\rfloor+1 \in \{\left\lfloor \frac{n-r-1}{3} \right\rfloor,\left\lfloor \frac{n-r-1}{3} \right\rfloor+1 \}$, it follow that
\begin{equation}\label{ecoo3}
\binom{q-r+1}{k-r+1}=\binom{n-\left\lfloor \frac{n-r-1}{3} \right\rfloor-1}{2\left\lfloor \frac{T}{2} \right\rfloor+1}
\geq \binom{n-\left\lfloor \frac{n-r-1}{3} \right\rfloor-1}{\left\lfloor \frac{n-r-1}{3} \right\rfloor}.
\end{equation}
Since $n-\left\lfloor \frac{n-r-1}{3} \right\rfloor-1\geq 2 \left\lfloor \frac{n-r-1}{3} \right\rfloor + r$, from \eqref{ecoo1} and
\eqref{ecoo3} it follows that
$$\binom{q-r+1}{k-r+1} \geq \binom{n-\left\lfloor \frac{n-r-1}{3} \right\rfloor-1}{\left\lfloor \frac{n-r-1}{3} \right\rfloor}
\geq \binom{ 2\left\lfloor \frac{n-r-1}{3} \right\rfloor +r}{ \left\lfloor \frac{n-r-1}{3} \right\rfloor} \geq \binom{N-q+k-1}{k}.$$
Since, by \eqref{ecoo2}, $\binom{n-q+k-1}{k-r+1}\geq 1$, it follows that
$$\binom{N-q+k-1}{k} - \binom{n-q+k-1}{k-r+1} \leq \binom{q-r+1}{k-r+1},$$
and thus, by Proposition \ref{c1}, we get 
$$\hdepth(S/I_{n,r})\leq q-1=n+r-\left\lfloor \frac{n-r-1}{3} \right\rfloor -3,$$
as required.
\end{proof}

\begin{teor}\label{t1}
With the above notations, we have that: 
$$\hdepth(S/I_{n,r})\geq \left\lfloor \frac{n}{2} \right\rfloor + r - 2.$$
\end{teor}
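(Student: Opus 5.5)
We will verify the criterion of Proposition \ref{c1} directly for $q=\left\lfloor \frac{n}{2} \right\rfloor + r - 2$. Write $m=\left\lfloor \frac{n}{2} \right\rfloor$, so $q=m+r-2$. Since $n\geq 2$ we have $m\geq 1$, hence $q\geq r-1$. Also $q\leq n+r-2$. The indices $0\leq k\leq r-1$ need no work: there Proposition \ref{p1}(3) gives $\beta_k^q(S/I_{n,r})=\binom{N-q+k-1}{k}\geq 0$. For $r\leq k\leq q$ with $k-r$ odd, the term $-(-1)^{k-r}\binom{q-r+1}{k-r+1}$ in Proposition \ref{p1}(4) is nonnegative, and the difference of the first two binomials is nonnegative by the same estimate used below. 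This is exactly why Proposition \ref{c1} keeps only the even case. So it suffices to prove, for $r\leq k\leq q$,
$$\binom{N-q+k-1}{k} - \binom{n-q+k-1}{k-r+1} \geq \binom{q-r+1}{k-r+1}.$$

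We reparametrize by $j=k-r+1$, so that $1\leq j\leq m-1$. Using $N=n+r-1$, the three binomials become
$$\binom{n-m+r-1+j}{n-m},\qquad \binom{n-m+j}{j},\qquad \binom{m-1}{j}.$$
The first equals $\binom{n-m+r-1+j}{r-1+j}$, rewritten via the symmetric index $n-m$.

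The key step is a monotonicity-plus-Pascal argument. Since $r\geq 2$ and $\binom{x}{n-m}$ is increasing in $x$, we get
$$\binom{n-m+r-1+j}{n-m}\geq\binom{n-m+1+j}{n-m}=\binom{n-m+1+j}{j+1}=\binom{n-m+j}{j}+\binom{n-m+j}{j+1}.$$
It therefore remains to show $\binom{n-m+j}{j+1}\geq\binom{m-1}{j}$. Since $n-m=\left\lceil \frac{n}{2} \right\rceil\geq m$, we have $\binom{n-m+j}{j+1}\geq \binom{m+j}{j+1}$. Then
$$\binom{m+j}{j+1}=\frac{m+j}{j+1}\binom{m+j-1}{j}\geq \binom{m+j-1}{j}\geq\binom{m-1}{j},$$
using $m\geq 1$. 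This closes the inequality, and by \eqref{hdep} we conclude $\hdepth(S/I_{n,r})\geq q$.

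The main obstacle is bookkeeping rather than any deep estimate. One must check that the index substitutions are correct, that the case $k<r$ and the odd case are really harmless, and that the only essential inputs are $r\geq 2$ (for the Pascal split) and $\left\lceil \frac{n}{2} \right\rceil\geq\left\lfloor \frac{n}{2} \right\rfloor$ (to dominate $\binom{m-1}{j}$). The choice $q=m+r-2$ is precisely what makes $N-q=\left\lceil \frac{n}{2} \right\rceil+1$ large enough for the last comparison.
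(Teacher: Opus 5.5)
Your proof is correct, and it reaches the conclusion by a different decomposition from the paper's. The paper divides the criterion of Proposition \ref{c1} by $\binom{n-q+k-1}{k-r+1}$. It writes $\binom{N-q+k-1}{k}$ as this binomial times $\frac{(n+r-q+k-2)\cdots(n-q+k)}{k(k-1)\cdots(k-r+2)}$, which is how \eqref{31} arises. With $q=\lfloor n/2\rfloor+r-2$ it observes that $\binom{q-r+1}{k-r+1}/\binom{n-q+k-1}{k-r+1}<1$ factor by factor. It then shows the product is $\geq 2$: each factor decreases in $k$, so it suffices to take $k=q$, where the product equals $\prod_{i=0}^{r-2}\frac{n+i}{\lfloor n/2\rfloor+i}\geq \frac{n}{\lfloor n/2\rfloor}\geq 2$.

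In your notation, the paper proves $\binom{n-m+r-1+j}{n-m}\geq 2\binom{n-m+j}{j}$ and $\binom{n-m+j}{j}>\binom{m-1}{j}$, a multiplicative split. You instead split additively via Pascal, $\binom{n-m+1+j}{j+1}=\binom{n-m+j}{j}+\binom{n-m+j}{j+1}$, and dominate $\binom{m-1}{j}$ by the second summand.

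Your route needs no ratio identities and no monotonicity in $k$. The single step $\binom{n-m+r-1+j}{n-m}\geq\binom{n-m+1+j}{n-m}$ isolates the only place $r\geq 2$ enters. Since the other two binomials and the range $1\leq j\leq m-1$ do not depend on $r$, it also shows that the criterion for general $r$ follows from the case $r=2$. The empty range for $n\in\{2,3\}$ is handled automatically as well.

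The paper's multiplicative form \eqref{31} is instead a reformulation of the criterion valid for every $q$, and it makes the slack explicit as a factor at the extremal $k$. Your remark about odd $k-r$ is harmless but redundant, since Proposition \ref{c1} already restricts to even $k-r$.
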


\begin{proof}
Since 
$$\binom{N-q+k-1}{k}=\frac{(n+r-q+k-2)(n+r-q+k-3)\cdots (n-q+k)}{k(k-1)\cdots (k-r+2)}\cdot \binom{n-q+k-1}{k-r+1},$$
from Proposition \ref{c1} it follows that $\hdepth(S/I_n)$ is the maximal value of $q\geq r-1$, with $q\leq n+r-1$, such that
\begin{equation}\label{31}
\frac{(n+r-q+k-2)\cdots (n-q+k)}{k(k-1)\cdots (k-r+2)}\geq 1 + \frac{(q-r+1)(q-r)\cdots (q-k+1)}{(n-q+k-1)\cdots (n-q+r-1)},
\end{equation}
for all $r\leq k\leq q$ with $k-r$ even.

Let $q=\left\lfloor \frac{n}{2} \right\rfloor + r - 2$. In order to prove the result, according to \eqref{31}, it
is enough to show that
$$ \frac{(\left\lceil \frac{n}{2} \right\rceil + k) \cdots (\left\lceil \frac{n}{2} \right\rceil + k-r+2) }{k(k-1)\cdots (k-r+2)} \geq 1 +
   \frac{(\left\lfloor \frac{n}{2} \right\rfloor-1)(\left\lfloor \frac{n}{2} \right\rfloor-2)\cdots (\left\lfloor \frac{n}{2} \right\rfloor+r-k-1)}
	{(\left\lceil \frac{n}{2} \right\rceil + k-r+1)(\left\lceil \frac{n}{2} \right\rceil + k-r)\cdots(\left\lceil \frac{n}{2} \right\rceil+1)},$$
	for all $r\leq k\leq q$ with $k-r$ even. For $n\in\{2,3\}$, there is nothing to prove, so we may assume $n\geq 4$.
	Since the right side term is less than $2$, it is enough to prove that:
	$$ \frac{(\left\lceil \frac{n}{2} \right\rceil + k) \cdots (\left\lceil \frac{n}{2} \right\rceil + k-r+2) }{k(k-1)\cdots (k-r+2)} \geq 2,\text{ for all }
	   r\leq k\leq q\text{ with }k-r\text{ even }.$$
	Since $\frac{a+1}{b+1} < \frac{a}{b}$ for $a>b>0$, it is enough to prove the above inequality for $k=q=\left\lfloor \frac{n}{2} \right\rfloor + r - 2$, that is
	$$ \frac{(n+r-2) \cdots (n+1)n }{(\left\lfloor \frac{n}{2} \right\rfloor + r - 2) \cdots (\left\lfloor \frac{n}{2} \right\rfloor +1)\left\lfloor \frac{n}{2} \right\rfloor } \geq 2, $$
	which is obviously true, as $r\geq 2$. Hence, the proof is complete.
\end{proof}

\begin{teor}\label{t2}
If $n\geq 2$ and $r\geq 1 + \binom{n-2}{\left\lfloor \frac{n-2}{2} \right\rfloor}$ then
$$\hdepth(S/I_{n,r})=n+r-3.$$
\end{teor}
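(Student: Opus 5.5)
We prove the two inequalities $\hdepth(S/I_{n,r})\leq n+r-3$ and $\hdepth(S/I_{n,r})\geq n+r-3$ separately. The upper bound holds without the hypothesis on $r$; the hypothesis is needed only for the lower bound. The whole argument reduces to the case $q=N-2$ of Proposition \ref{c1}, where the binomial coefficients collapse to linear expressions.

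\emph{Upper bound.} We apply Corollary \ref{cc1}. Since $n\geq 2$, we have $\binom{0+r}{r}=1<n$. Hence $a=\max\{j\geq 0\;:\;\binom{j+r}{r}<n\}$ is well defined and $a\geq 0$. Corollary \ref{cc1} then gives $\hdepth(S/I_{n,r})\leq n+r-3-a\leq n+r-3$.

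\emph{Lower bound.} Put $q=n+r-3=N-2$, so that $q\geq r-1$ because $n\geq 2$. By Proposition \ref{c1}, it suffices to verify
$$\binom{N-q+k-1}{k}-\binom{n-q+k-1}{k-r+1}\geq \binom{q-r+1}{k-r+1}$$
for every $r\leq k\leq q$ with $k-r$ even. With $N-q=2$ the three terms become:
\begin{itemize}
\item $\binom{N-q+k-1}{k}=\binom{k+1}{k}=k+1$;
\item $n-q+k-1=k-r+2$, so $\binom{n-q+k-1}{k-r+1}=\binom{k-r+2}{k-r+1}=k-r+2$;
\item $q-r+1=n-2$, so the right-hand side is $\binom{n-2}{k-r+1}$.
\end{itemize}
The left-hand side is therefore $(k+1)-(k-r+2)=r-1$, and the condition becomes
$$r-1\geq \binom{n-2}{k-r+1}\quad\text{for all admissible } k.$$

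This holds because a binomial coefficient $\binom{n-2}{j}$ is maximal at the central index, so $\binom{n-2}{k-r+1}\leq\binom{n-2}{\left\lfloor \frac{n-2}{2}\right\rfloor}$. The last quantity is at most $r-1$ by the hypothesis on $r$. If $k-r+1>n-2$, the binomial coefficient is simply $0$, although in fact $k\leq q$ forces $k-r+1\leq n-2$. Thus $q=n+r-3$ is admissible, and $\hdepth(S/I_{n,r})\geq n+r-3$.

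No step here is a real obstacle. The only point needing care is the bookkeeping that identifies $n-q+k-1=k-r+2$ and $q-r+1=n-2$ at $q=N-2$, which turns the binomial inequality into the linear comparison $r-1\geq\binom{n-2}{k-r+1}$ that the hypothesis controls.
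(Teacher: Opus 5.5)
Your proof is correct and is essentially the paper's argument: the lower bound is exactly the paper's computation at $q=n+r-3$, which reduces Proposition \ref{c1} to $r-1\geq\binom{n-2}{k-r+1}$ and concludes by the hypothesis on $r$. The one addition is that you derive the upper bound $\hdepth(S/I_{n,r})\leq n+r-3$ explicitly from Corollary \ref{cc1} with $a\geq 0$; the paper's proof leaves this bound implicit, and Remark \ref{ob1} attributes it to the non-principality of $I_{n,r}$.
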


\begin{proof}
Let $q:=n+r-3$. According to Proposition \ref{c1}, it is enough to show that
$$\binom{k+1}{k} - \binom{k-r+2}{k-r+1} \geq \binom{n-2}{k-r+1},$$
for all $k$ such that $0\leq k-r\leq q-r=n-3$ and $k-r$ is even. The above inequality
is equivalent to $$ r\geq 1 + \binom{n-2}{k-r+1},\text{ for all }0\leq k-r\leq n-3\text{ with }k-r\text{ even,}$$
which is true, by hypothesis.
\end{proof}

\begin{cor}\label{ct1}
With the above notations, we have that: $$\lim_{r\to\infty} \frac{\hdepth(S/I_{n,r})}{r} = 1,\text{ for all }n\geq 1.$$
\end{cor}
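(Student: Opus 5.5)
The plan is to squeeze $\hdepth(S/I_{n,r})$ between a lower and an upper bound that are both $r+O(1)$ for fixed $n$, and then divide by $r$.

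\emph{Case $n=1$.} By the preceding Example, $\hdepth(S/I_{1,r})=r-1$. Hence the ratio is $(r-1)/r$, which tends to $1$.

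\emph{Case $n\geq 2$, lower bound.} Once $r\geq 1+\binom{n-2}{\lfloor (n-2)/2\rfloor}$, which holds for every sufficiently large $r$ since $n$ is fixed, Theorem \ref{t2} gives the exact value $\hdepth(S/I_{n,r})=n+r-3$. So for large $r$,
$$\frac{\hdepth(S/I_{n,r})}{r}=1+\frac{n-3}{r}\longrightarrow 1 \quad (r\to\infty).$$
If one prefers not to pass to large $r$ first, Theorem \ref{t1} gives $\hdepth(S/I_{n,r})\geq \lfloor n/2\rfloor+r-2$ for all $r\geq 2$.

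\emph{Case $n\geq 2$, upper bound.} Proposition \ref{c1} restricts the maximum to $q\leq n+r-2$, so $\hdepth(S/I_{n,r})\leq n+r-2$ for all $r$. Combining the two bounds,
$$\frac{\lfloor n/2\rfloor+r-2}{r}\leq \frac{\hdepth(S/I_{n,r})}{r}\leq \frac{n+r-2}{r}.$$
Both outer expressions tend to $1$ as $r\to\infty$, so the limit is $1$ by squeezing.

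There is no real obstacle here. The only points to watch are that the standing assumption $n,r\geq 2$ forces $n=1$ to be handled separately via the Example, and that the hypothesis of Theorem \ref{t2} depends only on $n$, so it eventually holds for fixed $n$.
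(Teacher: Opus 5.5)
Your proof is correct and follows the paper's route. For fixed $n\geq 2$, Theorem \ref{t2} gives $\hdepth(S/I_{n,r})=n+r-3$ once $r\geq 1+\binom{n-2}{\lfloor (n-2)/2\rfloor}$, which is exactly the paper's one-line argument. Your separate treatment of $n=1$ via the Example is a useful addition, since Theorem \ref{t2} only covers $n\geq 2$ while the corollary claims $n\geq 1$. The optional squeeze using Theorem \ref{t1} and $q\leq n+r-2$ is also sound.
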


\begin{proof}
It follows immediately from Theorem \ref{t2}.
\end{proof}

\begin{obs}\label{ob1}\rm
Since $n\geq 2$, the ideal $I_{n,r}\subset S=K[x_1,\ldots,x_{n+r-1}]$ is not principal, and therefore, by \cite[Theorem 2.2]{bordi1},
 $\hdepth(S/I_{n,r})\leq n+r-3$. Theorem \ref{t2} shows that, for $r\geq 1 + \binom{n-2}{\left\lfloor \frac{n-2}{2} \right\rfloor}$, $\hdepth(S/I_{n,r})$ reaches its maximal (possible) value, with respect to the number of variables of $S$.
On the other hand, according to Proposition \ref{pro1}, we have $\hdepth(I_{n,r})=r+\left\lfloor \frac{n-1}{2} \right\rfloor$.
It follows that $$\hdepth(S/I_{n,r})-\hdepth(I_{n,r}) = \left\lfloor \frac{n}{2} \right\rfloor - 1,\text{ for all }n\geq 2\text{ and }
r\geq 1 + \binom{n-2}{\left\lfloor \frac{n-2}{2} \right\rfloor}.$$
This shows, in particular, that for any $k\geq 0$, we can find a polynomial ring $S$ and a squarefree monomial ideal $I\subset S$
such that $\hdepth(S/I)=\hdepth(I)+k$.
\end{obs}

The following result is a generalization of \cite[Theorem 2.6]{cipu}.

\begin{teor}\label{t4}
Let $n,r\geq 2$. Let $\varepsilon>0$ and $A:=\left\lfloor\frac{1}{8\varepsilon}\log \left(  \frac{1}{2} + \frac{1}{4(r-1)\varepsilon}  \right)\right\rfloor+r-1$. Then:
$$\hdepth(S/I_{n,r})\leq \left\lceil \frac{n}{2} \right\rceil + (r-1)\left\lfloor \varepsilon n \right\rfloor + A + r - 2,\text{ for all }n\geq 2.$$
\end{teor}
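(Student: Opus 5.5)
The plan is to exhibit, for $q = \left\lceil \frac{n}{2} \right\rceil + (r-1)\left\lfloor \varepsilon n \right\rfloor + A + r - 1$, a single index $k$ with $r\leq k\leq q$ and $k-r$ even for which the inequality of Proposition \ref{c1} fails. Then $\hdepth(S/I_{n,r})\leq q-1$, which is the claimed bound. If $q>n+r-2$ the bound already follows from Remark \ref{ob1}, so I assume $q\leq n+r-2$.

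By the identity used at the start of the proof of Theorem \ref{t1}, namely \eqref{31}, it suffices to find such a $k$ with
$$\frac{(n+r-q+k-2)\cdots (n-q+k)}{k(k-1)\cdots (k-r+2)} < 1 + \frac{(q-r+1)(q-r)\cdots (q-k+1)}{(n-q+k-1)\cdots (n-q+r-1)}.$$
It is even enough to make the right-hand fraction (call it $R$) exceed the left side (call it $L$). I would write $m=\left\lfloor \varepsilon n\right\rfloor$ and take $k$ as large as possible, namely $k=q$ or $k=q-1$ according to parity.

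\textbf{Bounding $L$.} The left side is a product of $r-1$ ratios of the form $\frac{n-q+k+i}{k-r+2+i}$. With $k\approx q\approx n/2 + (r-1)m$, each numerator is about $n/2+r$ and each denominator about $n/2+(r-1)m$. Hence $L\leq (1+c/n)^{r-1}$, which is bounded by an absolute constant depending only on $r$. This gives the leading term $\left\lceil n/2\right\rceil$.

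\textbf{Bounding $R$.} For $k=q$ the numerator is $(q-r+1)!/ (0)!$-type, i.e. $\binom{q-r+1}{k-r+1}$ after clearing factorials. The denominator is $(n-q+k-1)\cdots(n-q+r-1)$, which is a falling product in $n-q+k-1 = n-1$. So $R=\binom{q-r+1}{k-r+1}\big/\binom{n-q+k-1}{k-r+1}$. More transparently, $R$ is a product of $k-r+1$ factors $\frac{q-j}{n-q+k-1-\cdots}$. The shift of $q$ beyond $n/2$ by $(r-1)m+A$ makes a block of about $(r-1)m$ of these factors each of size at least $1+\Theta(\varepsilon)$. The remaining factors are at least a little below $1$, and this is where the excess $A$ pays for them.

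I would take logarithms and use $\log(1+x)\geq x/2$, or a direct comparison with $e^{8\varepsilon A}$. The aim is $R\geq \left(\frac12+\frac{1}{4(r-1)\varepsilon}\right)^{-1}$-type bounds mirroring the definition of $A$. That definition, with its $\frac{1}{8\varepsilon}\log(\cdot)$ form, is exactly calibrated so that the product of the favourable factors beats the constant bound on $L$ (roughly $1+\frac{r-1}{\ldots}$) once $8\varepsilon(A-r+1)\geq\log(\frac12+\frac{1}{4(r-1)\varepsilon})$.

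The main obstacle is this careful factor-by-factor estimate of $R$ against $L$, uniformly in $n$. In particular, I need to choose which $k$ (possibly not $q$ but $k$ around $r+2\lfloor\cdot\rfloor$ as in Theorem \ref{t0}) makes both estimates clean. I would first try $k=q$ or $q-1$ and follow the argument of \cite[Theorem 2.6]{cipu} for $r=2$, replacing its single ratio by the product of $r-1$ ratios. This is what produces the factor $(r-1)$ in front of $\lfloor\varepsilon n\rfloor$.
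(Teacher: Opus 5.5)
\textbf{There is a genuine gap: the proposed witness $k\in\{q,q-1\}$ cannot work.} Your reduction is the paper's: drop the middle binomial, so it suffices to find one $k=r+2s\le q$ with $\binom{N-q+k-1}{k}\le\binom{q-r+1}{k-r+1}$, i.e.\ $L\le R$. The problem is at the top of the range. Your own formula is $R=\binom{q-r+1}{k-r+1}\big/\binom{n-q+k-1}{k-r+1}$.

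\begin{itemize}
\item At $k=q$ this gives $R=1/\binom{n-1}{q-r+1}$. Every factor $\frac{q-r+1-i}{n-1-i}$ is at most $1$, so there is no ``favourable block''.
\item At $k=q-1$ it gives $R=(q-r+1)/\binom{n-2}{q-r}$. For $q$ near $n/2$ both values are tiny.
\item At $k=q$ one has $L=\prod_{i=0}^{r-2}\frac{n+i}{q-r+2+i}\ge 1$. The numerators are $n+i$, not $n/2+r$, so $L$ is roughly $(\frac12+(r-1)\varepsilon)^{-(r-1)}$, not $(1+c/n)^{r-1}$.
\end{itemize}

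In fact Vandermonde gives $\binom{n+r-2}{q}-\binom{n-1}{q-r+1}\ge (r-1)\binom{n-1}{q-r+2}\ge 1=\binom{q-r+1}{q-r+1}$ whenever $q\le n+r-3$. So the inequality of Proposition~\ref{c1} \emph{holds} at $k=q$, and $k=q$ can never give more than the trivial bound $n+r-3$. Beyond this, the actual comparison of $R$ with $L$ is deferred as ``the main obstacle'', so no proof is present yet.

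\textbf{What is missing is where to place $k$.} Write $R=\prod_{i=1}^{k-r+1}\frac{q-r+2-i}{n-q+r-2+i}$. The factors come in reciprocal pairs, so $R>1$ only for $k-r+1<2q-n-2r+3\approx 2(r-1)\varepsilon n+2A$. For $k$ much smaller than $\varepsilon n$, however, $L$ grows like $(n/k)^{r-1}$. Hence the witness must have $k-r$ of order $\varepsilon n$.

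The paper sets $p=\lfloor n/2\rfloor$ (the odd case reduces to the even one), $k=r+2s$ with $s=(r-1)(\lfloor 2p\varepsilon\rfloor+1)$, and $x=p-(r-1)\lfloor 2p\varepsilon\rfloor-A$. It rewrites the needed inequality as
\[
\prod_{j=0}^{r-2}\Bigl(1+\frac{x-r-1}{2s+2+j}\Bigr)\le\prod_{j=0}^{2s}\Bigl(1+\frac{2A-r+2}{x-r+j}\Bigr).
\]

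\begin{itemize}
\item Left side: since $2s+2>4(r-1)p\varepsilon$, each of the $r-1$ factors is below $\frac12+\frac{1}{4(r-1)\varepsilon}$. This is exactly where $B=\log\bigl(\frac12+\frac{1}{4(r-1)\varepsilon}\bigr)$ enters $A$.
\item Right side: it has $2s+1\approx 4(r-1)p\varepsilon$ factors with denominators of size about $p$. Using $\log(1+y)\ge\frac{y}{1+y}$ and convexity, its logarithm is roughly $4(r-1)\varepsilon(2A-r+2)$.
\item The choice $A=\lfloor B/(8\varepsilon)\rfloor+r-1$ makes this at least $(r-1)B$.
\end{itemize}

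So the coefficient $r-1$ in front of $\lfloor\varepsilon n\rfloor$ lets $s$ be proportional to $(r-1)\varepsilon n$ while the numerators $2A-r+2$ stay positive. It does not come from replacing one ratio by $r-1$ ratios in a $k=q$ argument.
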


\begin{proof}
Let $q:=\left\lceil \frac{n}{2} \right\rceil + (r-1)\left\lfloor \varepsilon n \right\rfloor + A + r - 1$. Let $n_0$ be the smallest
$n$ for which $q\leq n+r-2$. If $n<n_0$ then the assertion is trivial, since $\hdepth(S/I_{n,r})\leq n+r-3$,
as $I_{n,r}$ is not pricipal. Assume $n\geq n_0$. 

Since $\binom{n-q+2s+r-1}{2s+1}>0$, according to Proposition \ref{c1}, it is enough to find $0\leq s\leq \left\lfloor \frac{q}{2} \right\rfloor$ such that
\begin{equation}\label{dorinta1}
\binom{N-q+2s+r-1}{2s+r} \leq \binom{q-r+1}{2s+1},
\end{equation}
Note that, \eqref{dorinta1} is equivalent to 
\begin{equation}\label{dorinta2}
\binom{\left\lfloor \frac{n}{2} \right\rfloor - (r-1)\left\lfloor n\varepsilon \right\rfloor-A+2s-1}{2s+r} \leq 
\binom{\left\lceil \frac{n}{2} \right\rceil + (r-1)\left\lfloor n\varepsilon  \right\rfloor + A}{2s+1}.
\end{equation}
If $n=2p$ then \eqref{dorinta2} is equivalent to 
\begin{equation}\label{dorinta3}
\binom{p - (r-1)\left\lfloor 2p\varepsilon \right\rfloor-A+2s-1}{2s+r} \leq 
\binom{p + (r-1)\left\lfloor 2p \varepsilon  \right\rfloor + A}{2s+1}.
\end{equation}
If $n=2p+1$ then \eqref{dorinta2} is equivalent to 
\begin{equation}\label{dorinta4}
\binom{p - (r-1)\left\lfloor (2p+1)\varepsilon \right\rfloor-A+2s-1}{2s+r} \leq 
\binom{p +1+ (r-1)\left\lfloor (2p+1) \varepsilon \right\rfloor + A}{2s+1}.
\end{equation}
Note that, if \eqref{dorinta3} holds then so does \eqref{dorinta4}. Thus, in order to complete the proof,
it is enough to find $0\leq s\leq \left\lfloor \frac{q}{2} \right\rfloor$ such that 
\eqref{dorinta3} holds.

Let $x:=p-(r-1)\left\lfloor 2p\varepsilon \right\rfloor-A$. Note that \eqref{dorinta3} is equivalent to
$$\frac{(x+2s-1)(x+2s-2)\cdots (x-r)}{(2s+r)(2s+r-1)\cdots (2s+2)} \leq (x+2(r-1) \left\lfloor 2p\varepsilon \right\rfloor + 2A) \cdots 
 (x+2(r-1) \left\lfloor 2p\varepsilon \right\rfloor + 2A-2s),$$
which is again equivalent to \small
$$\frac{(x+2s-1)\cdots (x+2s-r+1)}{(2s+r)\cdots (2s+2)} \leq 
\frac{(x+2(r-1) \left\lfloor 2p\varepsilon \right\rfloor + 2A) \cdots 
 (x+2(r-1) \left\lfloor 2p\varepsilon \right\rfloor + 2A-2s)}{(x+2s-r)\cdots (x-r)},$$ \normalsize
which is again equivalent to
$$\prod_{j=0}^{r-2} \left( 1+ \frac{x-r-1}{2s+2+j} \right) \leq \prod_{j=0}^{2s} \left( 1 + \frac{2(r-1)\left\lfloor 2p\varepsilon \right\rfloor + 2A +r-2s }{x-r+j} \right).$$
In order to prove the above inequality, it suffices to show that
$$(r-1)\log\left( 1 + \frac{x-r-1}{2s+2} \right) \leq \sum_{j=0}^{2s} \log\left( 1 + \frac{2(r-1)\left\lfloor 2p\varepsilon \right\rfloor + 2A +r-2s }{x-r+j} \right).$$
We claim that $s=(r-1)(\left\lfloor 2p\varepsilon \right\rfloor+1)$ does the job. In other words, we claim that  \small
\begin{equation}\label{dorintaa}
(r-1)\log\left ( 1 + \frac{p-(r-1)\left\lfloor 2p\varepsilon \right\rfloor-A-r-1}
{2(r-1) (\left\lfloor 2p\varepsilon \right\rfloor+1) + 2} \right)
\leq \sum_{j=0}^{2(r-1)(\left\lfloor 2p\varepsilon \right\rfloor+1)} 
\log\left( 1 + \frac{  2A - r + 2 }{p-(r-1)\left\lfloor 2p\varepsilon \right\rfloor-A-r+j} \right).
\end{equation} \normalsize
Since $ \left\lfloor 2p\varepsilon \right\rfloor+1 > 2p\varepsilon $, it follows that
$$ 1 + \frac{p-(r-1)\left\lfloor 2p\varepsilon \right\rfloor-A-r-1}{2(r-1) (\left\lfloor 2p\varepsilon \right\rfloor+1) + 2} =
\frac{1}{2} +  \frac{p-A-2}{2(r-1) (\left\lfloor 2p\varepsilon \right\rfloor+1) + 2} < \frac{1}{2} + \frac{1}{4(r-1)\varepsilon}.$$
Let $B:= \log \left(  \frac{1}{2} + \frac{1}{4(r-1)\varepsilon}  \right)$.
Since $\log(1+y)<\frac{y}{y+1}$, in order to prove \eqref{dorintaa}, we need to show that 
$$\sum_{j=0}^{2(r-1)(\left\lfloor 2p\varepsilon \right\rfloor+1)} 
\frac{2A - r +2 }{p-(r-1)\left\lfloor 2p\varepsilon \right\rfloor + A - 2r +2 + j }\geq (r-1)B.$$
Note that
$$\sum_{\ell=-(r-1)(\left\lfloor 2p\varepsilon \right\rfloor+1)}^{(r-1)(\left\lfloor 2p\varepsilon \right\rfloor+1)} 
\frac{2A - r +2 }{p + A - r +1 + \ell }  > \frac{2(r-1)(\left\lfloor 2p\varepsilon \right\rfloor+1)(2A-r+2)}{p+A-r+1}.$$
Hence, it suffices to show that 
\begin{equation}\label{dorintaa2}
\frac{2(\left\lfloor 2p\varepsilon \right\rfloor+1)(2A-r+2)}{p+A-r+1} >B.
\end{equation}
Since $\left\lfloor 2p\varepsilon \right\rfloor+1 > 2p\varepsilon$, in order to prove \eqref{dorintaa2}, it is enough to show that
$$ A\geq r-1\text{ and }4\varepsilon (2A-r+2) >B.$$
Since $A=\left\lfloor \frac{B}{8\varepsilon} \right\rfloor + r-1$, the above inequalities are satisfied. Hence, the proof is complete.
\end{proof}

\begin{cor}\label{ct2}
With the above notations, we have that: $$\lim_{n\to\infty} \frac{\hdepth(S/I_{n,r})}{n} = \frac{1}{2},\text{ for all }r\geq 2.$$
\end{cor}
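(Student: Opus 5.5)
We squeeze $\hdepth(S/I_{n,r})/n$ between a lower bound and an upper bound for fixed $r\geq 2$ and let $n\to\infty$. The lower bound comes from Theorem \ref{t1} and the upper bound from Theorem \ref{t4}, applied with an arbitrary $\varepsilon>0$. (The paper's introduction attributes this corollary to Theorems \ref{t1} and \ref{t2}. Theorem \ref{t2} concerns large $r$, so Theorem \ref{t4} is clearly the intended upper bound.)

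For the lower bound, Theorem \ref{t1} gives $\hdepth(S/I_{n,r})\geq \left\lfloor \frac{n}{2} \right\rfloor + r-2 \geq \frac{n-1}{2}+r-2$. Dividing by $n$ and letting $n\to\infty$ with $r$ fixed gives
$$\liminf_{n\to\infty}\frac{\hdepth(S/I_{n,r})}{n}\geq \frac12.$$

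For the upper bound, fix $\varepsilon>0$ and let $A=A(\varepsilon,r)$ be the constant of Theorem \ref{t4}. It does not depend on $n$. Theorem \ref{t4} gives, for all $n\geq 2$,
$$\hdepth(S/I_{n,r})\leq \frac{n+1}{2}+(r-1)\varepsilon n + A + r-2.$$
Dividing by $n$ and taking $\limsup$ gives $\limsup_{n\to\infty} \hdepth(S/I_{n,r})/n\leq \frac12+(r-1)\varepsilon$. Since $\varepsilon>0$ is arbitrary and $r$ is fixed, $\limsup\leq\frac12$. Combined with the lower bound, the limit exists and equals $\frac12$.

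No step is a real obstacle. The only point requiring care is the order of quantifiers: $A$ depends on $\varepsilon$ and $r$ but not on $n$. So $n\to\infty$ must be taken first, for fixed $\varepsilon$, and only afterwards $\varepsilon\to 0$.
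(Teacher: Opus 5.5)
Your proposal is correct and matches the paper's proof: it squeezes $\hdepth(S/I_{n,r})/n$ between the lower bound of Theorem~\ref{t1} and the upper bound of Theorem~\ref{t4} with $\varepsilon>0$ arbitrary. You are also right that the paper's proof uses Theorem~\ref{t4}, not Theorem~\ref{t2} as the introduction says, and your $\liminf$/$\limsup$ formulation is a cleaner version of the paper's explicit choice $\varepsilon=\delta/r$ with a threshold $n_\delta$.
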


\begin{proof}
According to Theorem \ref{t1}, we have that
\begin{equation}\label{estim1}
\frac{\hdepth(S/I_{n,r})}{n} \geq \frac{1}{2} + \frac{r-3}{n},\text{ for all }n\geq 2.
\end{equation}
On the other hand, according to Theorem \ref{t4}, for any $\varepsilon>0$, there exists a constant $A=A(\varepsilon,r)$ such that
\begin{equation}\label{estim2}
\frac{\hdepth(S/I_{n,r})}{n} \leq \frac{1}{2} + (r-1)\varepsilon + \frac{A+r-1}{n},\text{ for all }n\geq 2.
\end{equation}
Now, let $\delta>0$ and put $\epsilon=\frac{\delta}{r}$. Then, there exists $n_{\delta}>0$ such that $\frac{r-3}{n}<\delta$
and $\frac{A+r-1}{n}<\epsilon$. From \eqref{estim1} and \eqref{estim2} it follows that
$$\left|\frac{\hdepth(S/I_{n,r})}{n} - \frac{1}{2} \right| <\delta,\text{ for all }n\geq n_{\delta}.$$
Hence, we get the required result.
\end{proof}

\section{The case $n=r$.}

We consider the ideal:
$$I_{n,n}=(x_1\cdots x_n,x_1\cdots x_{n-1}x_{n+1},\ldots,x_1\cdots x_{n-1}x_{2n-1})\subset S=K[x_1,\ldots,x_{2n-1}].$$
Note that $I_{n,n}$ is minimally generated by $n$ monomials of degree $n$. Our aim is to find upper and lower bounds 
for $\hdepth(S/I_{n,n})$.

First, we prove the following technical lemmas:

\begin{lema}\label{tehno}
For any $n\geq 2$, we have that: $$\binom{\left\lfloor \frac{5n}{6} \right\rfloor }{2\left\lfloor \frac{n}{6} \right\rfloor + 1 }\geq 
\binom{\left\lfloor \frac{3n}{2} \right\rfloor}{\left\lfloor \frac{n}{6} \right\rfloor}.$$
\end{lema}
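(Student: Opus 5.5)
Write $n=6m+t$ with $m=\left\lfloor \frac{n}{6} \right\rfloor$ and $0\leq t\leq 5$. Then $\left\lfloor \frac{5n}{6} \right\rfloor = 5m+a$ and $\left\lfloor \frac{3n}{2} \right\rfloor=9m+b$, where $a=\left\lfloor \frac{5t}{6}\right\rfloor\in\{0,\ldots,4\}$ and $b=\left\lfloor \frac{3t}{2}\right\rfloor\in\{0,\ldots,7\}$. The claim becomes
$$\binom{5m+a}{2m+1}\geq \binom{9m+b}{m}.$$
The case $m=0$, i.e.\ $2\leq n\leq 5$, is immediate: the right side equals $1$ and the left side equals $a\geq 1$. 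So assume $m\geq 1$. The plan is to compare both sides with the common quantity $\binom{5m+a}{m}$ and to reduce everything to comparing two products of $m$ (or $m+1$) ratios.

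\textbf{Step 1: upper bound on the right side.} Write
$$\frac{\binom{9m+b}{m}}{\binom{5m+a}{m}}=\prod_{i=0}^{m-1}\frac{9m+b-i}{5m+a-i}.$$
Each factor increases with $i$. Hence the product is at most $\left(\frac{8m+b+1}{4m+a+1}\right)^m$. Using $b\leq 2a+1$, which can be checked for each $t$, this is at most roughly $2^m$. The crude estimate $\frac{9m+b}{5m+a}\approx 1.8$ holds only for the first factor, so I will simply keep the bound $\left(\frac{8m+b+1}{4m+a+1}\right)^m\leq 2^m\cdot c$ with an explicit small constant $c$ absorbing $a,b$.

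\textbf{Step 2: lower bound on the left side.} Write
$$\frac{\binom{5m+a}{2m+1}}{\binom{5m+a}{m}}=\prod_{i=m}^{2m}\frac{5m+a-i}{i+1}.$$
The factors decrease, from about $4$ at $i=m$ to about $\frac{3}{2}$ at $i=2m$. Heuristically the product behaves like $\left(\frac{64}{27}\right)^m\approx 2.37^m$, since $\int_1^2\log\frac{5-x}{x}\,dx=\log\frac{64}{27}$, which beats $2^m$.

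To make this rigorous I will split the index range. For $m\leq i\leq m+\left\lfloor \frac{m}{2}\right\rfloor$ every factor is at least $\frac{7m/2}{3m/2+1}$, which is essentially $\frac{7}{3}$. For the remaining indices every factor is at least $\frac{3m}{2m+1}\geq$ roughly $\frac{3}{2}$. This gives a lower bound of order $\left(\frac{7}{3}\right)^{m/2}\left(\frac{3}{2}\right)^{m/2}=3.5^{m/2}\approx 1.87^m$. That is \emph{not} enough against $2^m$, so a finer split will be needed, for instance into quarters, giving factors at least $\frac{4}{1.25}$, $\frac{3.75}{1.5}$, $\frac{3.5}{1.75}$, $\frac{3.25}{2}$ respectively. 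The product of these four bounds is about $26$, so the product is at least about $26^{m/4}\approx 2.26^m$, which beats $2^m$ for large $m$.

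\textbf{Main obstacle.} The delicate point is the bookkeeping of floors and of the offsets $a,b$ in these block estimates, together with the fact that the asymptotic margin $2.26^m$ versus $2^m c$ only kicks in from some explicit $m_0$. I expect $m_0$ to be moderate, say $m_0\leq 10$. I would first try to carry the quarter-block estimate with exact lower-order terms to get $m_0$ as small as possible. The finitely many remaining pairs $(m,t)$ with $1\leq m<m_0$ would then be verified by direct computation of the two binomial coefficients.
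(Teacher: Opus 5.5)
You reduce correctly to $\binom{5m+a}{2m+1}\ge\binom{9m+b}{m}$, and your case $m=0$ and Step 1 are correct. Step 1 is in fact sharper than you say: $b\le 2a+1$ gives $\frac{8m+b+1}{4m+a+1}\le\frac{8m+2a+2}{4m+a+1}=2$, so the right-hand ratio is at most $2^m$ with $c=1$.

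The gap is in Step 2. Your quarter-block bounds $\frac{4}{1.25},\frac{3.75}{1.5},\frac{3.5}{1.75},\frac{3.25}{2}$ are not valid lower bounds. In each block you evaluate the numerator $5m+a-i$ at the left endpoint and the denominator $i+1$ at the right endpoint. The numerator decreases in $i$, so the minimum over a block comes from evaluating both at the right endpoint, as you correctly did in the two-block split. The correct values are $3,\frac73,\frac{13}{7},\frac32$, whose product is $\frac{39}{2}$, not $26$. The block estimate therefore gives about $(39/2)^{m/4}\approx 2.10^m$, not $2.26^m$.

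Since $c=1$, this still beats $2^m$ asymptotically, so the strategy survives. But the margin is only about $1.05^m$, and your guess $m_0\le 10$ is unsupported. The floor and $+1$ bookkeeping, plus the finite check you defer, are now the entire content of the proof. As written, Step 2 is not established.

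A clean way to finish without asymptotics: since $a\ge 0$, you may set $a=0$ in Step 2 and pair the factor at $i=m+j$ with the one at $i=2m-j$. For $0\le j\le m/2$,
$$(4m-j)(3m+j)-4(m+j+1)(2m-j+1)=4m^2-3mj+3j^2-12m-4\ge \tfrac{13}{4}m^2-12m-4,$$
which is $\ge 0$ for $m\ge 4$. When $m$ is even, the unpaired middle factor $\frac{7m/2}{3m/2+1}$ is $\ge 2$ for $m\ge 4$. Hence the left ratio is at least $2^{m+1}$ for $m\ge 4$. For $m=1,2,3$ it equals $2,\frac{28}{5},\frac{99}{7}$, each at least $2^m$. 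Note that $m=1$, $t=1$ is an equality case, $\binom{5}{3}=\binom{10}{1}$, so neither step has any slack there. Together with your Step 1, this proves all six residues of $n$ modulo $6$ at once.

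The paper works residue by residue instead. For $n=6p+1,6p+2,6p+4$ it clears denominators, drops one factor, and pairs factors as $(5p-j)(4p-j)\ge(9p+1-j)(2p-j)$ and analogues. It then deduces the remaining residues from these. Your normalization by $\binom{5m+a}{m}$, once repaired, replaces that case analysis with a single argument.
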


\begin{proof}
If $2\leq n\leq 5$ then $\left\lfloor \frac{n}{6} \right\rfloor=0$ and the conclusion is trivial, as $\left\lfloor \frac{5n}{6} \right\rfloor\geq 1$.
Assume $n\geq 6$. We consider several cases:
\begin{enumerate}
\item[(1)] If $n=6p+1$, then the conclusion is equivalent to 
\begin{equation}\label{ek2}
\binom{5p}{2p+1}\geq \binom{9p+1}{p},\text{ for all }p\geq 1.
\end{equation}
Equation \eqref{ek2} is equivalent to
\begin{equation}\label{ekk2}
5p(5p-1)\cdots (3p) \geq (9p+1)(9p)\cdots (8p+2)(2p+1)(2p)\cdots (p+1).
\end{equation}
Since $3p\geq 2p+1$, in order to prove \eqref{ekk2}, it is enough to show
\begin{equation}\label{ekkk2}
5p(5p-1)\cdots (3p+1) \geq (9p+1)(9p)\cdots (8p+2)(2p)\cdots (p+1).
\end{equation}
For $0\leq j\leq p-1$, we have that \small
$$(5p-j)(4p-j) = 20p^2-9pj+j^2 \geq (18p^2+2p)-(11p+1)j+j^2=(9p+1-j)(2p-j),$$ \normalsize
from which, equation \eqref{ekkk2} follows. Hence, \eqref{ek2} holds.

\item[(2)] If $n=6p+2$, then the conclusion is equivalent to 
\begin{equation}\label{ek3}
\binom{5p+1}{2p+1}\geq \binom{9p+3}{p},\text{ for all }p\geq 1.
\end{equation}
Equation \eqref{ek3} is equivalent to
\begin{equation}\label{ekk3}
(5p+1)(5p)\cdots (3p+1) \geq (9p+3)(9p)\cdots (8p+4)(2p+1)(2p)\cdots (p+1).
\end{equation}
Since $3p+1\geq 2p+1$, in order to prove \eqref{ekk3}, it is enough to show
\begin{equation}\label{ekkk3}
(5p+1)(5p)\cdots (3p+2) \geq (9p+3)(9p)\cdots (8p+4)(2p)\cdots (p+1).
\end{equation}
For $0\leq j\leq p-1$, we have that \small
$$(5p+1-j)(4p+1-j) = 20p^2+9p+1-(9p+2)j+j^2 \geq (18p^2+6p)-(11p+3)j+j^2=(9p+3-j)(2p-1),$$ \normalsize
from which, equation \eqref{ekkk3} follows. Hence, \eqref{ek3} holds.

\item[(3)] If $n=6p+3$, then the conclusion is equivalent to 
\begin{equation}\label{ek4}
\binom{5p+2}{2p+1}\geq \binom{9p+4}{p},\text{ for all }p\geq 1.
\end{equation}
Since $$\frac{\binom{5p+2}{2p+1}}{\binom{5p+1}{2p+1}}=\frac{5p+2}{3p+1}> \frac{9p+4}{8p+4} = \frac{\binom{9p+4}{p}}{\binom{9p+3}{p}},$$
\eqref{ek4} follows from \eqref{ek3}.

\item[(4)] If $n=6p+4$, then the conclusion is equivalent to 
\begin{equation}\label{ek5}
\binom{5p+3}{2p+1}\geq \binom{9p+6}{p},\text{ for all }p\geq 1.
\end{equation}
Equation \eqref{ek5} is equivalent to
\begin{equation}\label{ekk5}
(5p+3)(5p+2)\cdots (3p+3) \geq (9p+6)(9p+5)\cdots (8p+7)(2p+1)(2p)\cdots (p+1).
\end{equation}
Since $3p+3\geq 2p+1$, in order to prove \eqref{ekk5}, it is enough to show
\begin{equation}\label{ekkk5}
(5p+3)(5p+2)\cdots (3p+4) \geq (9p+6)(9p+5)\cdots (8p+7)(2p)\cdots (p+1).
\end{equation}
For $0\leq j\leq p-1$, we have that \small
$$(5p+3-j)(4p+3-j) = 20p^2+27p+9-(9p+6)j+j^2 \geq (18p^2+12p)-(11p+6)j+j^2=(9p+6-j)(2p-1),$$ \normalsize
from which, equation \eqref{ekkk5} follows. Hence, \eqref{ek5} holds.

\item[(5)] If $n=6p+5$, then the conclusion is equivalent to 
\begin{equation}\label{ek6}
\binom{5p+4}{2p+1}\geq \binom{9p+7}{p},\text{ for all }p\geq 1.
\end{equation}
Since $$\frac{\binom{5p+4}{2p+1}}{\binom{5p+1}{2p+3}}=\frac{5p+4}{3p+3}> \frac{9p+7}{8p+7} = \frac{\binom{9p+7}{p}}{\binom{9p+6}{p}},$$
\eqref{ek6} follows from \eqref{ek5}.

\item[(6)] If $n=6p$, then the conclusion is equivalent to 
\begin{equation}\label{ek1}
\binom{5p}{2p+1}\geq \binom{9p}{p},\text{ for all }p\geq 1.
\end{equation}
Note that \eqref{ek1} follows from \eqref{ek2}.
\end{enumerate}
\end{proof}

\begin{lema}\label{tech}
We have that:
\begin{enumerate}
\item[(1)] $\binom{6p+2s}{p}\geq \binom{p+2s+1}{2s+1}+\binom{4p-1}{2s+1}$, for all $p\geq 1$ and $0\leq s\leq 2p-1$.
\item[(2)] $\binom{6p+2s+2}{p+1}\geq \binom{p+2s+2}{2s+1}+\binom{4p-1}{2s+1}$, for all $p\geq 1$ and $0\leq s\leq 2p-1$.
\item[(3)] $\binom{6p+2s+3}{p+1}\geq \binom{p+2s+2}{2s+1}+\binom{4p}{2s+1}$, for all $p\geq 1$ and $0\leq s\leq 2p-1$.
\item[(4)] $\binom{6p+2s+4}{p+1}\geq \binom{p+2s+2}{2s+1}+\binom{4p+1}{2s+1}$, for all $p\geq 1$ and $0\leq s\leq 2p$.
\item[(5)] $\binom{6p+2s+5}{p+1}\geq \binom{p+2s+2}{2s+1}+\binom{4p+2}{2s+1}$, for all $p\geq 1$ and $0\leq s\leq 2p$.
\end{enumerate}
\end{lema}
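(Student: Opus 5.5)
The plan is to treat all five inequalities by one scheme. In each case the left side is a single binomial $\binom{6p+c+2s}{m}$ with the lower index $m\in\{p,p+1\}$ fixed, and the right side is a sum of two binomials with the same lower index $2s+1$. The first summand, $\binom{p+2s+1}{2s+1}=\binom{p+2s+1}{p}$ (respectively $\binom{p+2s+2}{2s+1}=\binom{p+2s+2}{p+1}$), has the same lower index as the left side after passing to the complement. So I will bound the left side from below by the first summand plus something large. I will use Pascal's rule repeatedly, or the Vandermonde identity
$$\binom{6p+2s}{p}=\sum_{i=0}^{p}\binom{p+2s+1}{p-i}\binom{5p-1}{i},$$
which isolates the $i=0$ term $\binom{p+2s+1}{p}$, exactly the first summand. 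It then remains to show that the rest of the sum dominates $\binom{4p-1}{2s+1}$, for instance through the $i=p$ term $\binom{5p-1}{p}$, or through the term with $i=p-j$ chosen according to $s$.

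For item (1), the key remaining inequality is
$$\sum_{i=1}^{p}\binom{p+2s+1}{p-i}\binom{5p-1}{i}\geq\binom{4p-1}{2s+1}.$$
I will split according to the size of $2s+1$ relative to $p$.
\begin{itemize}
\item If $2s+1\leq p$, then $\binom{4p-1}{2s+1}\leq\binom{5p-1}{2s+1}$, and this is one of the terms on the left (the one with $i=2s+1$, whose other factor is at least $1$).
\item If $2s+1>p$, write $\binom{4p-1}{2s+1}=\binom{4p-1}{4p-2s-2}$ and use $\binom{4p-1}{2s+1}\leq\binom{4p-1}{\lfloor(4p-1)/2\rfloor}\leq 2^{4p-1}$. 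The left side contains the term $\binom{p+2s+1}{0}\binom{5p-1}{p}=\binom{5p-1}{p}$, but this is too small in general. Instead I will compare with the full left side $\binom{6p+2s}{p}\geq\binom{8p-1}{p}$, obtained by taking $s\geq p/2$. I will then compare ratio-wise to $\binom{4p-1}{2s+1}$ in the worst case $2s+1\approx 2p$, where $\binom{4p-1}{2p}\sim 16^p$ while $\binom{8p}{p}\sim (8^8/7^7)^{p}\approx 20.4^p$.
\end{itemize}
This exponential comparison is the heart of the matter. I will make it rigorous by a factor-by-factor pairing, as in Lemma \ref{tehno}: write both sides as products of $p$ (or so) factors and pair them so that each pair satisfies an elementary quadratic inequality. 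The remaining slack must also absorb the first summand. For that I note that, for $s$ in this range, $\binom{p+2s+1}{p}\leq\binom{5p}{p}$ is much smaller than $\binom{6p+2s}{p}$, so it suffices to prove $\binom{6p+2s}{p}\geq 2\max\bigl(\binom{p+2s+1}{p},\binom{4p-1}{2s+1}\bigr)$.

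Items (2)–(5) follow the same pattern with $m=p+1$ and the bottom parameters shifted by at most $2$. I expect to derive them from (1), or from one another, via monotonicity ratios as in cases (3) and (5) of Lemma \ref{tehno}. Going from (2) to (3), the left side gains the factor $\frac{6p+2s+3}{5p+2s+2}$. On the right only the second term changes, by the factor $\frac{4p}{4p-2s-1}$. Since this ratio can be large, I will instead redo the pairing argument with $4p+j$ in place of $4p-1$, for $j\in\{-1,0,1,2\}$, and treat the extended range $s=2p$ in (4) and (5) separately, where $\binom{4p+1}{4p+1}=1$ or $\binom{4p+2}{4p+1}=4p+2$ is trivial. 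Small $p$ ($p=1,2$) will be checked directly.

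The main obstacle is the middle range $2s+1\approx 2p$, where $\binom{4p-1}{2s+1}$ is near its maximum and the margin is only exponential with base ratio about $20.4/16$. The first attempt there is the explicit product pairing: write $\binom{6p+2s}{p}/\binom{4p-1}{2s+1}$ as a product and show each paired factor is at least $1$, choosing the pairing by the parity and position of $2s+1$ relative to $2p$.
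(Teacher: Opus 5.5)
Your Vandermonde step is correct, and for small $s$ it is neater than the paper's argument. For $2s+1\le p$, the $i=2s+1$ term already gives $\binom{5p-1}{2s+1}\ge\binom{4p-1}{2s+1}$ on top of the $i=0$ term. The same split works verbatim in (2)--(5), with complementary top index $5p+c$ and range $2s+1\le p+1$. This proves the inequality directly, without the factor $2$, and replaces the paper's downward induction on $j$ with ratio comparisons. The range where the first summand dominates is also fine, since $\binom{6p+2s}{p}\ge\binom{2(p+2s+1)}{p}\ge 2\binom{p+2s+1}{p}$ for $s\le 2p-1$.

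\textbf{The gap is the middle range}, which you yourself call the heart of the matter, and there the one concrete step you give is false. $\binom{6p+2s}{p}\ge\binom{8p-1}{p}$ needs $s\ge p$; for $p/2\le s\le p-1$ you only have $6p+2s\ge 7p$. The crude bound $\binom{4p-1}{2s+1}\le 2^{4p-1}$ is also unusable: for $p=2$, $s=1$ it would require $\binom{14}{2}=91\ge 2^{8}$. A lower bound valid uniformly on this range is only of order $\binom{7p}{p}$, with growth $7^7/6^6\approx 17.65$ against the $16$ of $\binom{4p}{2p}$. So there is little room for loose estimates, and ``a pairing chosen by the parity and position of $2s+1$'' is a placeholder, not an argument.

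\textbf{The missing idea} is to make the comparison independent of $s$. For all $(p-1)/2\le s\le\lfloor 3p/2\rfloor-1$ one has $\binom{6p+2s}{p}\ge\binom{7p-1}{p}$ and $2\binom{4p-1}{2s+1}\le 2\binom{4p-1}{2p-1}=\binom{4p}{2p}$. So (1) reduces to the single inequality $\binom{7p-1}{p}\ge\binom{4p}{2p}$ (an equality at $p=1$), i.e.\ $(7p-1)\cdots(6p)\cdot(2p)\cdots(p+1)\ge(4p)\cdots(2p+1)$. This follows from the pairing $(6p+j)(p+1+j)\ge(3p+1+j)(2p+1+j)$ for $0\le j\le p-1$.

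The middle range of (2) then follows from that of (1), because $\binom{6p+2s+2}{p+1}\ge\binom{6p+2s}{p}$. Items (3)--(5) need their own central inequalities, since the top index of the second summand grows:
\[
\binom{7p+1}{p+1}\ge 2\binom{4p}{2p+1},\qquad \binom{7p+2}{p+1}\ge 2\binom{4p+1}{2p+1},\qquad \binom{7p+3}{p+1}\ge 2\binom{4p+2}{2p+1}.
\]
Each is again proved by an explicit factor pairing, with $p=1$ checked separately in the last one. Your proposal only asserts that (2)--(5) ``follow the same pattern''. As written, only the small-$s$ and large-$s$ ranges are actually established.
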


\begin{proof}
(1) Note that $p+2s+1 > 4p-1$ if and only if $s \geq \left\lfloor \frac{3p}{2} \right\rfloor$. Thus,
    in order to prove (1) for $\left\lfloor \frac{3p}{2} \right\rfloor \leq s\leq 2p-1$, it is enough to show that 
		$$\binom{6p+2s}{p}\geq 2\binom{p+2s+1}{2s+1}=2\binom{p+2s+1}{p}.$$
    This inequality is trivial, since $6p+2s\geq 2(p+2s+1)$ for $s\leq 2p-1$. Hence, we may 
     assume that $0\leq s\leq \left\lfloor \frac{3p}{2} \right\rfloor-1$. 
		
		In order to complete the proof of (1), 
		it is enough to show that 
		\begin{equation}\label{wish}
		\binom{6p+2s}{p}\geq 2\binom{4p-1}{2s+1},
		\end{equation}
		for all $0\leq s\leq \left\lfloor \frac{3p}{2} \right\rfloor-1$. If $s\geq p-1$ then
		$$ \binom{6p+2s}{p} \geq \binom{8p-2}{p}\text{ and }\binom{4p-1}{2s+1}\leq \binom{4p-2}{2p-1}.$$		
		Hence, in order to prove \eqref{wish} for $p-1\leq s\leq \left\lfloor \frac{3p}{2} \right\rfloor-1$,
		it is enough to show \eqref{wish} for $s=p-1$, that is 
		$$\binom{8p-2}{p}\geq 2\binom{4p-1}{2p-1}=\binom{4p}{2p}.$$
		In fact, we will show something more, namely:
		\begin{equation}\label{strict}
		\binom{7p-1}{p}\geq \binom{4p}{2p}.
		\end{equation}
		Indeed \eqref{strict} is equivalent to
		$$ \frac{(7p-1)(7p-1)\cdots (6p)}{p(p-1)\cdots 1} \geq \frac{4p(4p-1)\cdots(2p+1)}{(2p)(2p-1)\cdots 1},$$
		which is again equivalent to
		\begin{equation}\label{stricte}
		(7p-1)\cdots (6p)(2p)\cdots (p+1)\geq (4p)\cdots (2p+1).
		\end{equation}
		For $0\leq j\leq p-1$ we have that
		$$(6p+j)(p+1+j)=(6p^2+6p)+(7p+1)j+j^2 \geq (6p^2+5p+1)+(5p+2)j+j^2=(3p+1+j)(2p+1+j).$$
		From this, \eqref{stricte} follows immediately. Thus \eqref{strict} holds.
		On the other hand, if $\frac{p-1}{2}\leq s\leq p-1$ then 
		$$\binom{6p+2s}{p}\geq \binom{7p-1}{p}\text{ and }2\binom{4p-1}{2s+1}\leq 2\binom{4p-1}{2p-1}=\binom{4p}{2p}.$$
		Hence, according to the previous arguments, it follows that \eqref{wish} holds for $s$ with $\frac{p-1}{2}\leq s \leq \left\lfloor \frac{3p}{2} \right\rfloor-1$.
		
		In order to complete the proof of (1), it suffice to show that
		\begin{equation}\label{ultima}
		\binom{6p+j}{p} \leq 2\binom{4p-1}{j+1},\text{ for }0\leq j\leq p-1.
		\end{equation}
		We prove \eqref{ultima} by induction on $j\leq p-1$.
		From \eqref{strict}, it follows that \eqref{ultima} holds for $j=p-1$.
		Now, let $0\leq j\leq p-2$ and assume that \eqref{ultima} holds for $j+1$.
		We have $$\frac{\binom{6p+j}{p}}{\binom{6p+j+1}{p}}=\frac{5p+j+1}{6p+j+1}\text{ and }\frac{\binom{4p-1}{j+1}}{\binom{4p-1}{j+2}}=\frac{j+1}{4p-j-1}.$$
		Since $0\leq j\leq p-2$, it is clear that $\frac{5p+j+1}{6p+j+1}\geq \frac{j+1}{4p-j-1}$. From induction hypothesis, it follows that \small
		$$\binom{6p+j}{p} = \frac{5p+j+1}{6p+j+1}\binom{6p+j+1}{p}\geq \frac{5p+j+1}{6p+j+1}\binom{4p-1}{j+2} \geq \frac{j+1}{4p-j-1}\binom{4p-1}{j+2}
		=\binom{4p-1}{j+1},$$ \normalsize
		and thus, the induction step is completed.

(2) Note that $p+2s+2 > 4p-1$ if and only if  $s \geq \left\lfloor \frac{3p-1}{2} \right\rfloor$. Thus,
    in order to prove (2) for $\left\lfloor \frac{3p-1}{2} \right\rfloor \leq s\leq 2p-1$, it is enough to show that 
		$$\binom{6p+2s+2}{p+1}\geq 2\binom{p+2s+2}{2s+1}=2\binom{p+2s+2}{p+1},$$
		which is trivial, since $6p+2s+2 \geq 2(p+2s+2)$ for $s\leq 2p-1$.
		
		Now, assume $s\leq \left\lfloor \frac{3p-1}{2} \right\rfloor-1$. In order to complete the proof of (2), 
		it is enough to show that 
		\begin{equation}\label{wish2}
		\binom{6p+2s+2}{p+1}\geq 2\binom{4p-1}{2s+1},
		\end{equation}
		for all $0\leq s\leq \left\lfloor \frac{3p-1}{2} \right\rfloor-1$. Since $\binom{6p+2s+2}{p+1}\geq \binom{6p+2s}{p}$, 
		the conclusion follows from \eqref{wish}.

(3) We note that $p+2s+2 > 4p$ if and only if $s \geq \left\lfloor \frac{3p}{2} \right\rfloor$.
    Thus, in order to prove (3) for $\left\lfloor \frac{3p}{2} \right\rfloor \leq s\leq 2p-1$, it is enough to show that 
		$$\binom{6p+2s+3}{p+1}\geq 2\binom{p+2s+2}{2s+1}=2\binom{p+2s+2}{p+1},$$
    which is trivial, since $6p+2s+3\geq 2(p+2s+2)$ for $s\leq 2p-1$.

		Now, assume $s\leq \left\lfloor \frac{3p}{2} \right\rfloor-1$. In order to complete the proof of (3), 
		it is enough to show that 
		\begin{equation}\label{wish3}
		\binom{6p+2s+3}{p+1}\geq 2\binom{4p}{2s+1},
		\end{equation}
		for all $0\leq s\leq \left\lfloor \frac{3p}{2} \right\rfloor-1$. 
		We claim that 
		\begin{equation}\label{strict3}
		\binom{7p+1}{p+1}\geq 2\binom{4p}{2p+1}.
		\end{equation}
		Indeed, \eqref{strict3} is equivalent to
		$$\frac{(7p+1)(7p)\cdots (6p+1)}{(p+1)!} \geq \frac{(4p)(4p-1)\cdots (2p)}{(2p+1)!},$$
		which is further equivalent to
		\begin{equation}\label{stricte3}
		(7p+1)(7p)\cdots (6p+1)(2p+1)\cdots (p+2) \geq 2(4p)(4p-1) \cdots (2p)
		\end{equation}
		Since $6p+1>2(3p)=6p$, in order to prove \eqref{stricte3} it is enough to show that
		\begin{equation}\label{sstricte3}
		(7p+1)(7p)\cdots (6p+2)(2p+1)\cdots (p+2) \geq (4p)\cdots (3p+1)(3p-1)\cdots (2p).
		\end{equation}		
		For $0\leq j\leq p-1$ we have that
		\begin{align*}
		(6p+2+j)(p+2+j)&=(6p^2+14p+4)+(7p+4)j+j^2 \geq \\
		& \geq (6p^2+2p)+(5p+1)j+j^2=(3p+1+j)(2p+j).
		\end{align*}
		From this, \eqref{sstricte3} follows immediately. Thus \eqref{strict3} holds.
		
		As in the proof of (1), from \eqref{strict3} it follows that \eqref{wish3} holds for $s\geq \frac{p-1}{2}$.
		In order to complete the proof of (3), it suffice to show that
		\begin{equation}\label{ultima3}
		\binom{6p+j+3}{p+1} \leq 2\binom{4p}{j+1},\text{ for }0\leq j\leq p-1.
		\end{equation}
    We use induction of $j\leq p-1$. The case $j=p-1$ follows from \eqref{strict3}. If $j\leq p-2$ and \eqref{ultima3} holds for $j+1$,
		in order to prove \eqref{ultima3} for $j$, it is enough to notice that
		$$\frac{\binom{6p+j+3}{p+1}}{\binom{6p+j+4}{p+1}}=\frac{5p+j+3}{6p+j+4}\geq \frac{j+2}{4p-j-1} = \frac{2\binom{4p}{j+1}}{2\binom{4p}{j+2}}.$$
		Hence, the proof is complete.
		
(4) We note that $p+2s+2 > 4p+1$ if and only if $s \geq \left\lfloor \frac{3p+1}{2} \right\rfloor$.
    Thus, in order to prove (4) for $\left\lfloor \frac{3p+1}{2} \right\rfloor \leq s\leq 2p$, it is enough to show that 
		$$\binom{6p+2s+4}{p+1}\geq 2\binom{p+2s+2}{2s+1}=2\binom{p+2s+2}{p+1},$$
    which is trivial, since $6p+2s+4\geq 2(p+2s+2)$ for $s\leq 2p$.
		
		Now, assume $s\leq \left\lfloor \frac{3p+1}{2} \right\rfloor-1$. In order to complete the proof of (4), 
		it is enough to show that 
		\begin{equation}\label{wish4}
		\binom{6p+2s+4}{p+1}\geq 2\binom{4p+1}{2s+1},
		\end{equation}
		for all $0\leq s\leq \left\lfloor \frac{3p+1}{2} \right\rfloor-1$. 
		We claim that 
		\begin{equation}\label{strict4}
		\binom{7p+2}{p+1}\geq 2\binom{4p+1}{2p+1}.
		\end{equation}
		Indeed, \eqref{strict4} is equivalent to
		$$\frac{(7p+2)(7p+1)\cdots (6p+2)}{(p+1)!} \geq \frac{2(4p+1)(4p)\cdots (2p+1)}{(2p+1)!},$$
		which is further equivalent to
		\begin{equation}\label{stricte4}
		(7p+2)(7p+1)\cdots (6p+2)(2p+1)\cdots (p+2) \geq 2(4p+1)(4p)\cdots (2p+1).
		\end{equation}
		Simplifying with $(6p+2)$ and $2(3p+1)$, respectively, in \eqref{stricte4}, we get the 
		equivalent inequality
   \begin{equation}\label{sstricte4}
		(7p+2)(7p+1)\cdots (6p+3)(2p+1)\cdots (p+2) \geq (4p+1)\cdots(3p+2)(3p)\cdots(2p+1).
		\end{equation}
		For $0\leq j\leq p-1$ we have that
		\begin{align*}
		(6p+3+j)(p+2+j)&=(6p^2+15p+6)+(7p+5)j+j^2 \geq \\
		& \geq (6p^2+7p+2)+(5p+3)j+j^2=(3p+2+j)(2p+1+j).
		\end{align*}
		From this, \eqref{sstricte4} follows immediately. Thus \eqref{strict4} holds.
		
		As in the proof of (1), from \eqref{strict4} it follows that \eqref{wish4} holds for $s\geq \frac{p-1}{2}$.
		In order to complete the proof of (4), it suffice to show that
		\begin{equation}\label{ultima4}
		\binom{6p+j+4}{p+1} \leq 2\binom{4p+1}{j+1},\text{ for }0\leq j\leq p-1.
		\end{equation}
    We use induction of $j\leq p-1$. The case $j=p-1$ follows from \eqref{strict3}. If $j\leq p-2$ and \eqref{ultima4} holds for $j+1$,
		in order to prove \eqref{ultima4} for $j$, it is enough to notice that
		$$\frac{\binom{6p+j+4}{p+1}}{\binom{6p+j+5}{p+1}}= \frac{5p+j+4}{6p+j+5} \geq \frac{j+2}{4p-j} = \frac{2\binom{4p+1}{j+1}}{2\binom{4p+1}{j+2}}.$$
		Hence, the proof is complete.

(5)	We note that $p+2s+2 > 4p+2$ if and only if $s \geq \left\lfloor \frac{3p}{2} \right\rfloor+1$.
    Thus, in order to prove (5) for $\left\lfloor \frac{3p}{2} \right\rfloor \leq s\leq 2p$, it is enough to show that 
		$$\binom{6p+2s+5}{p+1}\geq 2\binom{p+2s+2}{2s+1}=2\binom{p+2s+2}{p+1},$$
    which is trivial, since $6p+2s+5\geq 2(p+2s+2)$ for $s\leq 2p$.
		
		Now, assume $s\leq \left\lfloor \frac{3p}{2} \right\rfloor$. In order to complete the proof of (5), 
		it is enough to show that 
		\begin{equation}\label{wish5}
		\binom{6p+2s+5}{p+1}\geq 2\binom{4p+2}{2s+1},
		\end{equation}
		for all $0\leq s\leq \left\lfloor \frac{3p}{2} \right\rfloor$. 
	  We claim that 
		\begin{equation}\label{strict5}
		\binom{7p+3}{p+1}\geq 2\binom{4p+2}{2p+1}.
		\end{equation}
		The assertion holds for $p=1$, so we may assume that $p\geq 2$.		
		Note that \eqref{strict5} is equivalent to
		$$\frac{(7p+3)(7p+2)\cdots (6p+3)}{(p+1)!} \geq \frac{2(4p+2)(4p+1)\cdots (2p+2)}{(2p+1)!},$$
		which is further equivalent to
		\begin{equation}\label{stricte5}
		(7p+2)(7p+1)\cdots (6p+2)(2p+1)\cdots (p+2) \geq 2(4p+2)(4p+1)\cdots (2p+2).
		\end{equation}
		Since $p\geq 2$, it follows that $7p+2\geq 2(3p+2)=6p+4$, hence, in order to prove \eqref{stricte4},
		it is enough to show
		\begin{equation}\label{sstricte5}
		(7p+1)\cdots (6p+2)(2p+1)\cdots (p+2) \geq (4p+2)\cdot (3p+3)(3p+1)\cdots (2p+2).
		\end{equation}
		For $0\leq j\leq p-1$ we have that
		\begin{align*}
		(6p+2+j)(p+2+j)&=(6p^2+14p+4)+(7p+4)j+j^2 \geq \\
		& \geq (6p^2+12p+5)+(5p+6)j+j^2=(3p+3+j)(2p+2+j).
		\end{align*}
		From this, \eqref{sstricte5} follows immediately. Thus \eqref{strict5} holds.
		
		As in the proof of (1), from \eqref{strict5} it follows that \eqref{wish5} holds for $s\geq \frac{p-1}{2}$.
		In order to complete the proof of (5), it suffice to show that
		\begin{equation}\label{ultima5}
		\binom{6p+j+5}{p+1} \leq 2\binom{4p+2}{j+1},\text{ for }0\leq j\leq p-1.
		\end{equation}
    We use induction of $j\leq p-1$. The case $j=p-1$ follows from \eqref{strict5}. If $j\leq p-2$ and \eqref{ultima5} holds for $j+1$,
		in order to prove \eqref{ultima5} for $j$, it is enough to notice that
		$$\frac{\binom{6p+j+5}{p+1}}{\binom{6p+j+6}{p+1}}= \frac{5p+j+5}{6p+j+6} \geq \frac{j+2}{4p+1-j} = \frac{2\binom{4p+2}{j+1}}{2\binom{4p+2}{j+2}}.$$
		Hence, the proof is complete.
\end{proof}


\begin{teor}\label{t3}
With the above notations, for all $n\geq 2$, we have that:
$$\left\lfloor \frac{11n}{6} \right\rfloor - 2 \geq \hdepth(S/I_{n,n})\geq \left\lfloor \frac{9n}{5} \right\rfloor - 2.$$
\end{teor}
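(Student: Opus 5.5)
Throughout, set $r=n$, so $N=2n-1$. By Proposition \ref{c1}, $q$ is admissible exactly when, for every $k=n+2s$ with $0\leq 2s\leq q-n$,
$$\binom{2n-q+k-2}{k}\;\geq\;\binom{n-q+k-1}{2s+1}+\binom{q-n+1}{2s+1}.$$
I will treat the two inequalities of the statement separately. For the upper bound I will exhibit one $k$ at which this test fails. For the lower bound I will check the test for every admissible $s$, reducing each residue class of $n$ to Lemma \ref{tech}.

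\emph{Upper bound.} I will imitate the proof of Theorem \ref{t0}. Put $q'=\left\lfloor \frac{11n}{6}\right\rfloor-1$ and $k=n+2\left\lfloor\frac{n}{6}\right\rfloor$.
\begin{enumerate}
\item[(i)] First I will check that $n\leq k\leq q'$, which is a routine floor computation.
\item[(ii)] On the right-hand side, $q'-n+1=\left\lfloor\frac{5n}{6}\right\rfloor$ and $k-n+1=2\left\lfloor\frac n6\right\rfloor+1$. So the last binomial is exactly the left side of Lemma \ref{tehno}.
\item[(iii)] On the left-hand side, write $\binom{N-q'+k-1}{k}=\binom{N-q'+k-1}{N-q'-1}$. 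Here $N-q'-1=2n-1-\left\lfloor\frac{11n}{6}\right\rfloor$, which is essentially $\left\lfloor\frac n6\right\rfloor$. Hence the top entry is at most $\left\lfloor\frac{3n}{2}\right\rfloor$, and the lower index is at most $\left\lfloor\frac n6\right\rfloor$ or can be reduced to it by monotonicity. I expect to need a short case check on $n \bmod 6$ here, so that this binomial is bounded by $\binom{\lfloor 3n/2\rfloor}{\lfloor n/6\rfloor}$.
\item[(iv)] The middle binomial $\binom{n-q'+k-1}{2s+1}$ is at least $1$.
\end{enumerate}
Combining these with Lemma \ref{tehno}, the test fails at $q'$, as in Theorem \ref{t0}. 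Proposition \ref{c1} then gives $\hdepth(S/I_{n,n})\leq q'-1=\left\lfloor \frac{11n}{6}\right\rfloor-2$.

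\emph{Lower bound.} Put $q=\left\lfloor\frac{9n}{5}\right\rfloor-2$ and write $n=5p+i$ with $0\leq i\leq 4$; the small cases $p=0$ are handled directly. For $i=0$ we get $N-q-1=p-1$. With $k=5p+2s$ the three binomials become
$$\binom{6p+2s}{p},\qquad \binom{p+2s+1}{2s+1},\qquad \binom{4p-1}{2s+1}.$$
This is precisely Lemma \ref{tech}(1), and the constraint $2s\leq q-n=4p-2$ gives $s\leq 2p-1$. For $i=1,2,3,4$, the same substitution should produce exactly items (2)--(5) of Lemma \ref{tech}:
\begin{itemize}
\item[] the top entries are $6p+2s+2$, $6p+2s+3$, $6p+2s+4$, $6p+2s+5$, with lower index $p+1$;
\item[] the ranges are $s\leq 2p-1$ or $s\leq 2p$, matching $q-n$.
\end{itemize}
So the test of Proposition \ref{c1} holds for all even $k-n$, and $\hdepth(S/I_{n,n})\geq q$.

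\emph{Main obstacle.} The real difficulty, namely the binomial inequalities themselves, is already contained in Lemmas \ref{tehno} and \ref{tech}. What remains is bookkeeping: the residue-by-residue identification of the floors. Specifically, in the upper bound the parameters modulo $6$ must land on exactly the binomials of Lemma \ref{tehno}, possibly after a monotonicity step. In the lower bound the ranges of $s$ modulo $5$ must match those in Lemma \ref{tech}. I would verify each of these identifications explicitly before relying on them.
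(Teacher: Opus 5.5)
Your proposal is correct and follows the paper's proof essentially step for step. For the upper bound you use the same choice $q'=\lfloor 11n/6\rfloor-1$, $k=n+2\lfloor n/6\rfloor$ together with Lemma \ref{tehno}; no case split mod $6$ is actually needed, since $N-q'-1=\lceil n/6\rceil-1\le\lfloor n/6\rfloor$ and the top entry is at most $\lfloor 3n/2\rfloor$. For the lower bound you use the same residue-by-residue reduction mod $5$ to Lemma \ref{tech}(1)--(5), with $n\in\{2,3,4\}$ checked directly; note that for $n=5p$ one has $N-q-1=p$, not $p-1$, although the binomial $\binom{6p+2s}{p}$ you display is the correct one.
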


\begin{proof}
\textbf{The first inequality.} Let $q=\left\lfloor \frac{11n}{6} \right\rfloor - 1$ and $N=2n-1$. According to Proposition \ref{c1}, in order to prove the inequality,
we need to find some $n\leq k\leq q$, with $k-n$ even, such that 
\begin{equation}\label{voim}
\binom{N-q+k-1}{k}-\binom{n-q+k-1}{k-n+1} < \binom{q-n+1}{k-n+1}.
\end{equation}
We choose $k=n+2\left\lfloor \frac{n}{6} \right\rfloor$ and we note that \eqref{voim} is equivalent to
\begin{equation}\label{voim2}
\binom{2n- \left\lfloor \frac{5n}{6} \right\rfloor + 2 \left\lfloor \frac{n}{6} \right\rfloor - 1 }{ n - \left\lfloor \frac{5n}{6} \right\rfloor - 1 }-
\binom{n- \left\lfloor \frac{5n}{6} \right\rfloor + 2 \left\lfloor \frac{n}{6} \right\rfloor}{ n - \left\lfloor \frac{5n}{6} \right\rfloor - 1 }
 < \binom{\left\lfloor \frac{5n}{6} \right\rfloor}{2\left\lfloor \frac{n}{6} \right\rfloor+1}.
\end{equation}
Note that 
\begin{align*}
& 2n- \left\lfloor \frac{5n}{6} \right\rfloor + 2 \left\lfloor \frac{n}{6} \right\rfloor - 1 < 2n - \frac{5n}{6} + 2\left\lfloor \frac{n}{6} \right\rfloor
\leq 2n- \frac{5n}{6} + \frac{2n}{6} = \frac{3n}{2}\\
& n - \left\lfloor \frac{5n}{6} \right\rfloor - 1 < n-\frac{5n}{6} = \frac{n}{6}\text{ and }\left\lfloor \frac{3n}{2} \right\rfloor > 2\left\lfloor \frac{n}{6} \right\rfloor
\end{align*}
Therefore, in order to prove \eqref{voim2}, it suffices to show
$$\binom{\left\lfloor \frac{5n}{6} \right\rfloor}{2\left\lfloor \frac{n}{6} \right\rfloor+1} \geq \binom{\left\lfloor \frac{3n}{2} \right\rfloor}{\left\lfloor \frac{n}{6} \right\rfloor}.$$
This follows from Lemma \ref{tehno}, so we are done.

\textbf{The second inequality.} For $n\geq 4$, we prove it by case verification, so we may assume that $n\geq 5$.
Let $q=\left\lfloor \frac{9n}{5} \right\rfloor - 2$ and $N=2n-1$. According to Proposition \ref{c1}, in order to prove the inequality, 
we have to show that
\begin{equation}\label{vrem}
\binom{N-q+k-1}{k}-\binom{n-q+k-1}{k-n+1}\geq \binom{q-n+1}{k-n+1},
\end{equation}
for all $k$ with $0\leq k-n \leq q-n$ and $k-n=2s$. By straightforward computations, we note that \eqref{vrem} is equivalent to
\begin{equation}\label{vrem2}
\binom{2n-\left\lfloor \frac{4n}{5} \right\rfloor+2s}{n-\left\lfloor \frac{4n}{5} \right\rfloor} \geq 
\binom{n-\left\lfloor \frac{4n}{5} \right\rfloor+2s+1}{2s+1}+\binom{\left\lfloor \frac{4n}{5} \right\rfloor-1}{2s+1},
\end{equation}
for all $s$ with $0\leq 2s \leq \left\lfloor \frac{4n}{5} \right\rfloor-2$. We consider several cases:
\begin{itemize}
\item If $n=5p$ with $p\geq 1$, then \eqref{vrem2} is equivalent to
$$\binom{6p+2s}{p}\geq \binom{p+2s+1}{2s+1}+\binom{4p-1}{2s+1},\text{ for all }0\leq s\leq 2p-1.$$
Hence, we are done by Lemma \ref{tech}(1).
\item If $n=5p+1$ with $p\geq 1$, then \eqref{vrem2} is equivalent to
$$\binom{6p+2s+2}{p+1}\geq \binom{p+2s+2}{2s+1}+\binom{4p-1}{2s+1},\text{ for all }0\leq s\leq 2p-1.$$
Hence, we are done by Lemma \ref{tech}(2).
\item If $n=5p+2$ with $p\geq 1$, then \eqref{vrem2} is equivalent to
$$\binom{6p+2s+3}{p+1}\geq \binom{p+2s+2}{2s+1}+\binom{4p}{2s+1},\text{ for all }0\leq s\leq 2p-1.$$
Hence, we are done by Lemma \ref{tech}(3).
\item If $n=5p+3$ with $p\geq 1$, then \eqref{vrem2} is equivalent to
$$\binom{6p+2s+4}{p+1}\geq \binom{p+2s+2}{2s+1}+\binom{4p+1}{2s+1},\text{ for all }0\leq s\leq 2p.$$
Hence, we are done by Lemma \ref{tech}(4).
\item If $n=5p+4$ with $p\geq 1$, then \eqref{vrem2} is equivalent to
$$\binom{6p+2s+5}{p+1}\geq \binom{p+2s+2}{2s+1}+\binom{4p+2}{2s+1},\text{ for all }0\leq s\leq 2p.$$
Hence, we are done by Lemma \ref{tech}(5).
\end{itemize}
Now, the proof is complete.
\end{proof}

Based on our computer experiments, we propose the following:

\begin{conj}\label{conj}
There
exists a constant $\alpha\approx 1.817$ such that
$$\hdepth(S/I_{n,n})\in \{ \left\lfloor \alpha n \right\rfloor - 2, \left\lfloor \alpha n \right\rfloor - 1 \},\text{ for all }n\geq 2.$$
\end{conj}

\begin{obs}\rm
In fact, we can conjecture something more general. For any $t\in [0,1]$, there exists a constant $\alpha(t)\in [\frac{1}{2},1]$ such that
$$\lim_{n\to\infty} \frac{1}{n}\hdepth(S/I_{\left\lfloor tn \right\rfloor+n_0, \left\lfloor (1-t)n \right\rfloor+r_0}) = \alpha(t),$$
where $n_0,r_0\geq 2$ are some integers. Note that, according to Corollary \ref{ct1}, we have that
$$\alpha(0)=\lim_{n\to\infty} \frac{1}{n}(n_0,r_0+n) = 1.$$
Also, according to Corollary \ref{ct2}, we have that
$$\alpha(1)=\lim_{n\to\infty} \frac{1}{n}(n+n_0,r_0) = \frac{1}{2}.$$
Moreover, we believe that the map $t\mapsto \alpha(t)$ is decreasing and analytic.
\end{obs}





\end{document}